\newtheorem{thm}{Theorem}[section]
\numberwithin{equation}{section}
\newtheorem{rmk}{Remark}[section]
\newtheorem{prop}{Proposition}[section]
\newtheorem{lm}{Lemma}[section]
\newtheorem{cor}{Corollary}[section]
\def\@setauthors{%
  \begingroup
  \def\thanks{\protect\thanks@warning}%
  \trivlist
  \centering\footnotesize \@topsep30\p@\relax
  \advance\@topsep by -\baselineskip
  \item\relax
  \author@andify\authors
  \def\\{\protect\linebreak}%
  \authors%
  \ifx\@empty\contribs
  \else
    ,\penalty-3 \space \@setcontribs
    \@closetoccontribs
  \fi
  \endtrivlist
  \endgroup
}
\date{}
\begin{document}
\title[Coupled Backward Stochastic Parabolic Equations with One Control Force]{Null Controllability for Cascade systems of Coupled Backward Stochastic Parabolic Equations with One Distributed Control}

\author[S. Boulite]{\large Said Boulite$^1$}
\address{$^1$Cadi Ayyad University, National School of Applied Sciences, LMDP, UMMISCO (IRD-UPMC), Marrakesh, Morocco}
\email{s.boulite@uca.ma}

\author[A. Elgrou]{\large Abdellatif Elgrou$^2$}
\address{$^2$Cadi Ayyad University, Faculty of Sciences Semlalia, LMDP, UMMISCO (IRD-UPMC), Marrakesh, Morocco}
\email{abdoelgrou@gmail.com}
\email{maniar@uca.ma}

\author[L. Maniar]{\large Lahcen Maniar$^{2,3}$}
\address{$^3$University Mohammed VI  Polytechnic, Vanguard Center, Benguerir, Morocco}
\email{Lahcen.Maniar@um6p.ma}

\dedicatory{ \large Dedicated to the memory of Professor Hammadi Bouslous}

\begin{abstract}
We prove the null controllability of a cascade system of \(n\) coupled backward stochastic parabolic equations involving both reaction and convection terms, as well as general second-order parabolic operators, with \(n \geq 2\). To achieve this, we apply a single distributed control to the first equation, while the other equations are controlled through the coupling. To obtain our results, we develop a new global Carleman estimate for the forward stochastic parabolic adjoint system with some terms in the \(H^{-1}\)-space. Subsequently, we derive the appropriate observability inequality, and by employing the classical duality argument, we establish our null  controllability result. Additionally, we provide an estimate for the null control cost with respect to the final time \(T\) and the potentials.
\end{abstract}
\maketitle
\smallskip
\textbf{Keywords:} {Controllability, Stochastic parabolic systems, Carleman estimates, Observability.} 

\section{Introduction and main results}
Let $T>0$ and $(\Omega,\mathcal{F},\mathbf{F},\mathbb{P})$ be a fixed complete filtered probability space on which a one-dimensional standard Brownian motion $W(\cdot)$ is defined such that $\mathbf{F}=\{\mathcal{F}_t\}_{t\in[0,T]}$ is the natural filtration generated by $W(\cdot)$ and augmented by all the $\mathbb{P}$-null sets in $\mathcal{F}$. Let $\mathcal{X}$ be a Banach space; we denote by $L^2_{\mathcal{F}_t}(\Omega;\mathcal{X})$ the Banach space of all $\mathcal{X}$-valued $\mathcal{F}_t$-measurable random variables $X$ such that $\mathbb{E}\big(\vert X\vert_\mathcal{X}^2\big)<\infty$, with the canonical norm. $L^2_\mathcal{F}(0,T;\mathcal{X})$ denotes the Banach space of all $\mathcal{X}$-valued $\mathbf{F}$-adapted processes $X(\cdot)$ such that $\mathbb{E}\big(\vert X(\cdot)\vert^2_{L^2(0,T;\mathcal{X})}\big)<\infty$, with the canonical norm. $L^\infty_\mathcal{F}(0,T;\mathcal{X})$ represents the Banach space consisting of all $\mathcal{X}$-valued $\mathbf{F}$-adapted essentially bounded processes, with the canonical norm denoted by $|\cdot|_\infty$. The space $L^2_\mathcal{F}(\Omega;C([0,T];\mathcal{X}))$ indicates the Banach space of all $\mathcal{X}$-valued $\mathbf{F}$-adapted continuous processes $X(\cdot)$ such that $\mathbb{E}\big(\vert X(\cdot)\vert^2_{C([0,T];\mathcal{X})}\big)<\infty$, with the canonical norm and $C([0,T];\mathcal{X})$ denotes the Banach space of all $\mathcal{X}$-valued continuous functions defined on $[0,T]$. Similarly, one can define $L^\infty_\mathcal{F}(\Omega;C^m([0,T];\mathcal{X}))$ for any positive integer $m$. In the sequel, for every $\textbf{x},\textbf{y}\in\mathbb{R}^n$ ($n\geq1$), we denote by $|\textbf{x}|$ (resp., $(\textbf{x},\textbf{y})_{\mathbb{R}^n}$) the Euclidean norm of $\textbf{x}$ (resp., the Euclidean inner product of $\textbf{x}$ and $\textbf{y}$).

Let $G\subset\mathbb{R}^N$ ($N\geq1$) be an open and bounded domain with a $C^2$ boundary $\Gamma=\partial G$, and $G_0\Subset G$ be a given non-empty open subset strictly contained in $G$ (i.e., $\overline{G_0}\subset G$ where $\overline{G_0}$ denotes the closure of $G_0$). We indicate by $\chi_{G_0}$ the characteristic function of $G_0$ and $dx$ designates the Lebesgue measure in $G$. Set 
$$Q=(0,T)\times G, \,\,\,\quad \Sigma=(0,T)\times\Gamma\,\,\quad \textnormal{and}\,\,\,\quad Q_0=(0,T)\times G_0.$$ 

Let $n\geq1$ be an integer. The main goal of this paper is to study the null controllability of the following coupled backward stochastic parabolic system
\begin{equation}\label{1.1}
\begin{cases}
\begin{array}{ll}
dy_1+L_1(t)y_1\,dt=\left[\displaystyle\sum_{j=1}^n a_{1j}y_j+\sum_{j=1}^n C_{1j}\cdot\nabla y_j+\sum_{j=1}^n b_{1j}Y_j+\chi_{G_0}(x)u\right]\,dt+Y_1\,dW(t)&\textnormal{in}\,\,Q,\\
dy_2+L_2(t)y_2\,dt=\left[\displaystyle\sum_{j=1}^n a_{2j}y_j+\sum_{j=1}^n C_{2j}\cdot\nabla y_j+\sum_{j=1}^n b_{2j}Y_j\right]\,dt+Y_2\,dW(t)&\textnormal{in}\,\,Q,\\
\hspace{0.7cm}\vdots\\
dy_n+L_n(t)y_n\,dt=\left[\displaystyle\sum_{j=1}^n a_{nj}y_j+\sum_{j=1}^n C_{nj}\cdot\nabla y_j+\sum_{j=1}^n b_{nj}Y_j\right]\,dt+Y_n\,dW(t)&\textnormal{in}\,\,Q,\\
y_i=0,\quad 1\leq i\leq n&\textnormal{on}\,\,\Sigma,\\
y_i(T)=y_{T,i},\quad 1\leq i\leq n&\textnormal{in}\,\,G,
			\end{array}
		\end{cases}
\end{equation}
where $y_{T,i}\in L^2_{\mathcal{F}_T}(\Omega;L^2(G))$ ($1\leq i\leq n$) are the finale states, $u\in L^2_\mathcal{F}(0,T;L^2(G_0))$ is the control variable and the coefficients $a_{ij}$, $C_{ij}$ and $b_{ij}$ are assumed to be
\begin{align*}
&a_{ij}=a_{ij}(\omega,t,x)\in L_\mathcal{F}^\infty(0, T; L^\infty(G)), \quad C_{ij}=C_{ij}(\omega,t,x)\in L_\mathcal{F}^\infty(0, T;L^\infty(G;\mathbb{R}^N)),\\
&\hspace{2cm}b_{ij}=b_{ij}(\omega,t,x)\in L_\mathcal{F}^\infty(0, T; L^\infty(G)),\quad\quad 1\leq i,j\leq n.
\end{align*}
It is easy to see that system \eqref{1.1} can be written as
\begin{equation}\label{1.1Abst}
\begin{cases}
\begin{array}{ll}
dy+L(t)y\,dt=\left(Ay+C\cdot\nabla y+BY+D\chi_{G_0}(x)u\right)\,dt+Y\,dW(t)&\textnormal{in}\,\,Q,\\
y=0&\textnormal{on}\,\,\Sigma,\\
y(T)=y^T&\textnormal{in}\,\,G,
			\end{array}
		\end{cases}
\end{equation}
where $(y,Y)=((y_i)_{1\leq i\leq n},(Y_i)_{1\leq i\leq n})$ is the state variable, $y^T=(y_{T,i})_{1\leq i\leq n}\in L^2_{\mathcal{F}_T}(\Omega;L^2(G;\mathbb{R}^n))$ is the final state, $\nabla y=(\nabla y_i)_{1\leq i\leq n}$, and the coefficients $A,C,B$ and $D$ are defined as follows
$$A=(a_{ij})_{1\leq i,j\leq n}\in L_\mathcal{F}^\infty(0, T; L^\infty(G;\mathbb{R}^{n\times n})),\qquad C=(C_{ij})_{1\leq i,j\leq n}\in L_\mathcal{F}^\infty(0, T; L^\infty(G;\mathbb{R}^{nN\times n})),$$ 
$$B=(b_{ij})_{1\leq i,j\leq n}\in L_\mathcal{F}^\infty(0, T; L^\infty(G;\mathbb{R}^{n\times n})),\qquad D\equiv e_1=(1,0,...,0)^*\in\mathbb{R}^n,$$ 
and $L(t)=\textnormal{diag}(L_1(t),...,L_n(t))$ is the matrix operator with $L_k$ are the second order parabolic operators defined by
\begin{align}\label{opLk}
L_k(t)y=\displaystyle\sum_{i,j=1}^N \frac{\partial}{\partial x_i}\left(\beta_{ij}^k(t,x)\frac{\partial y}{\partial x_j}\right), \qquad 1\leq k\leq n,
\end{align}
where $\beta_{ij}^k:\Omega\times Q\rightarrow\mathbb{R}$ satisfy the following assumptions:
\begin{enumerate}
\item $\beta_{ij}^k\in L^\infty_\mathcal{F}(\Omega;C^1([0,T];W^{1,\infty}(G)))$ and $\beta_{ij}^k=\beta_{ji}^k$, for any $1\leq i,j\leq N$.
\item  There exists a constant $\beta_0>0$ such that 
\begin{align}\label{assmponalpha}
\sum_{i,j=1}^N \beta_{ij}^k\xi_i\xi_j\geq \beta_0|\xi|^2\qquad\textnormal{for any}\quad (\omega,t,x,\xi)\in \Omega\times Q\times\mathbb{R}^N,
\end{align}
with $x=(x_1,...,x_N)$ and $\xi=(\xi_1,...,\xi_N)$.
\end{enumerate}

In this paper, we study the case when the matrices $A$, $C$ and $B$ have the following form
\begin{align}\label{assmAC}
\begin{aligned}
&A=\begin{pmatrix}
a_{11} & a_{12} & a_{13} & \cdots & a_{1n} \\
a_{21} & a_{22} & a_{23} & \cdots & a_{2n} \\
0 & a_{32} & a_{33} & \cdots & a_{3n} \\
\vdots & \ddots & \ddots & \ddots & \vdots \\
0 & 0 & \cdots & a_{n,n-1} & a_{nn}
\end{pmatrix},\qquad
C=\begin{pmatrix}
C_{11} & C_{12} & C_{13} & \cdots & C_{1n} \\
0 & C_{22} & C_{23} & \cdots & C_{2n} \\
0 & 0 & C_{33} & \cdots & C_{3n} \\
\vdots & \vdots & \vdots & \ddots & \vdots \\
0 & 0 & 0 & \cdots & C_{nn}
\end{pmatrix},\\
&\hspace{4cm}B=\begin{pmatrix}
b_{11} & b_{12} & b_{13} & \cdots & b_{1n} \\
0 & b_{22} & b_{23} & \cdots & b_{2n} \\
0 & 0 & b_{33} & \cdots & b_{3n} \\
\vdots & \vdots & \vdots & \ddots & \vdots \\
0 & 0 & 0 & \cdots & b_{nn}
\end{pmatrix},
\end{aligned}
\end{align}
where 
\begin{align*}
a_{ij}\in L_\mathcal{F}^\infty(0, T; L^\infty(G)),\quad C_{ij}\in L_\mathcal{F}^\infty(0, T; L^\infty(G;\mathbb{R}^N)), \\b_{ij}\in L_\mathcal{F}^\infty(0, T; L^\infty(G))
\quad\textnormal{for} \quad1\leq i\leq j\leq n,
\end{align*}
and the coupling terms
\begin{align*}
a_{i,i-1}\in L_\mathcal{F}^\infty(0, T; L^\infty(G)) \quad\textnormal{for}\quad 2\leq i\leq n
\end{align*}
satisfy the following assumption: There exists a constant $a_0>0$ such that
\begin{align}\label{asspmt}
    a_{i,i-1}\geq a_0 \quad\textnormal{or}\quad -a_{i,i-1}\geq a_0,\quad\textnormal{in}\quad(0,T)\times\widetilde{G}_0,\;\;\mathbb{P}\textnormal{-a.s.},
\end{align}
for any open set $\widetilde{G}_0\subset G_0$. In what follows, we denote by $M_0=\max_{2\leq i\leq n}|a_{i,i-1}|_\infty$.\\

Throughout this paper, $C_0$ denotes a positive constant depending only on $G$, $\widetilde{G}_0$, $a_0$, $\beta_0$, $M_0$ and $\beta_{ij}^k$, which may change from one place to another.

From \cite{zhou1992}, we have that \eqref{1.1} is well-posed i.e., for any terminal state
$y^T\in L^2_{\mathcal{F}_T}(\Omega;L^2(G;\mathbb{R}^n))$ and  control $u\in L^2_\mathcal{F}(0,T;L^2(G_0))$, the system \eqref{1.1} admits a unique weak solution 
$$(y,Y)\in \left(L^2_\mathcal{F}(\Omega;C([0,T];L^2(G;\mathbb{R}^n)))\bigcap L^2_\mathcal{F}(0,T;H^1_0(G;\mathbb{R}^n))\right)\times L^2_\mathcal{F}(0,T;L^2(G;\mathbb{R}^n)).$$
Moreover, there exists a constant $\mathcal{C}>0$ so that
\begin{align*}
&\,|y|_{L^2_\mathcal{F}(\Omega;C([0,T];L^2(G;\mathbb{R}^n)))}+|y|_{L^2_\mathcal{F}(0,T;H^1_0(G;\mathbb{R}^n))}+|Y|_{L^2_\mathcal{F}(0,T;L^2(G;\mathbb{R}^n))}\\
&\leq \mathcal{C}\left(|y^T|_{L^2_{\mathcal{F}_T}(\Omega;L^2(G;\mathbb{R}^n))}+|u|_{L^2_\mathcal{F}(0,T;L^2(G_0))}\right).
\end{align*}

The main result of this paper is the following null controllability of \eqref{1.1}.
\begin{thm}\label{thm01.1}
Let us assume that \eqref{asspmt} holds. Then, for any terminal state $y^T\in L^2_{\mathcal{F}_T}(\Omega;L^2(G;\mathbb{R}^n))$, there exist a control $\widehat{u}\in L^2_\mathcal{F}(0,T;L^2(G_0))$ such that the corresponding solution $(\widehat{y},\widehat{Y})$ of system \eqref{1.1} satisfies that $$\widehat{y}(0,\cdot) = 0\,\,\,\; \textnormal{in}\,\,G,\quad\mathbb{P}\textnormal{-a.s.}$$
Moreover, the control $\widehat{u}$ can be chosen so that
\begin{align}\label{1.2201}
\begin{aligned}
\vert \widehat{u}\vert_{L^2_\mathcal{F}(0,T;L^2(G))}\leq \sqrt{\exp(C_0K)}\,\left\vert y^T\right\vert_{L^2_{\mathcal{F}_T}(\Omega;L^2(G;\mathbb{R}^n))},
\end{aligned}
\end{align}
where the constant $K$ has the following form
\begin{align*}
K=1+T+\frac{1}{T}+\max_{i\leq j}\left(|a_{ij}|_\infty^{\frac{2}{3(j-i)+3}}+|C_{ij}|_\infty^{\frac{2}{3(j-i)+1}}+|b_{ij}|_\infty^{\frac{2}{3(j-i)+1}}+T\left(|a_{ij}|_\infty+|C_{ij}|^2_\infty+|b_{ij}|_\infty^2\right)\right).
\end{align*}
\end{thm}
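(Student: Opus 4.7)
The plan is to argue by duality in the stochastic sense. Let $(z,Z)$ denote the forward adjoint process associated with \eqref{1.1Abst}: computing $d\langle y,z\rangle$ by It\^o's formula shows that the correct adjoint is the forward stochastic parabolic system
\begin{equation*}
\begin{cases}
dz = \bigl[L(t) z - A^* z + \mathcal{D}_C z\bigr]\,dt - B^* z\, dW(t) & \textnormal{in}\,\,Q,\\
z = 0 & \textnormal{on}\,\,\Sigma,\\
z(0) = z_0 & \textnormal{in}\,\,G,
\end{cases}
\end{equation*}
where $\mathcal{D}_C$ is the formal adjoint of $C\cdot\nabla$, given by $(\mathcal{D}_C z)_j = \sum_i \operatorname{div}(C_{ij} z_i)$. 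In this adjoint the subdiagonal entry $a_{i,i-1}$ of $A$ reappears as the coefficient of $z_i$ in the equation for $z_{i-1}$, which is precisely the mechanism that will propagate information from the observed component $z_1$ up through $z_2,\dots,z_n$. The resulting duality identity
\begin{equation*}
\mathbb{E}\langle y^T, z(T)\rangle_{L^2(G;\mathbb{R}^n)} - \mathbb{E}\langle y(0), z_0\rangle = \mathbb{E}\int_0^T\!\!\int_{G_0} u\, z_1\, dx\, dt
\end{equation*}
reduces Theorem~\ref{thm01.1} to the observability inequality
\begin{equation*}
\mathbb{E}|z(T)|^2_{L^2(G;\mathbb{R}^n)} \leq \exp(C_0 K)\, \mathbb{E}\int_{Q_0}|z_1|^2\, dx\, dt
\end{equation*}
valid for every forward adjoint solution $z$; the passage from this to the control $\widehat{u}$ with the cost \eqref{1.2201} is the classical HUM/penalization argument.

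The observability inequality itself I would obtain from a global Carleman estimate applied iteratively along the cascade. With the usual weights $\theta = e^{s\alpha}$ and $\varphi$ (where $\alpha(t,x) = \tfrac{e^{\lambda\psi(x)}-e^{2\lambda|\psi|_\infty}}{t(T-t)}$) and an auxiliary subdomain $\widetilde{G}_0 \Subset G_0$, the central new ingredient announced in the abstract is a Carleman estimate for a \emph{single} forward stochastic parabolic equation whose right-hand side is allowed to contain $H^{-1}$ terms. This is indispensable because the coupling $\mathcal{D}_C z$ in the adjoint naturally sits only in $H^{-1}$ even when $z\in H^1_0$. Applying this single-equation estimate component by component yields a bound of the form $\sum_i \int_Q \theta^2(\ldots)|z_i|^2 \lesssim \sum_i \int_{Q_0} \theta^2 (\ldots)|z_i|^2$, after absorbing the cross-coupling contributions from $A^*z$, $\mathcal{D}_C z$ and $B^*z$ on the left for large Carleman parameters. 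To eliminate the local integrals of $z_2,\dots,z_n$ on the right, one proceeds iteratively for $i=2,\dots,n$: from the equation for $z_{i-1}$, test against $\theta^2 z_i$ on $(0,T)\times\widetilde{G}_0$ and apply It\^o's formula; since $|a_{i,i-1}|\geq a_0$ by \eqref{asspmt}, one can then solve for $\int\theta^2 |z_i|^2$ in terms of local integrals of $z_1,\dots,z_{i-1}$ and their derivatives. A final energy estimate at $t=T/2$ converts the resulting weighted inequality into the observability bound above.

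The main obstacle is to combine, in a single stochastic Carleman estimate, the three sources of difficulty that appear in the adjoint: the $H^{-1}$ gradient coupling $\mathcal{D}_C z$, the martingale coupling $B^* z\, dW$, and the cascade of first-order reaction terms $A^* z$. The first two cannot be handled by a naive Carleman, since they either raise the order of differentiation (the gradient terms) or involve spatially irregular It\^o integrands (the martingale terms); the resolution is to build into the single-equation Carleman an extra factor $s^{-1}$ absorbing an $H^{-1}$ norm on the right-hand side. The second difficulty is bookkeeping: at each level $i$, the iterative exchange produces additional derivatives of $z_{i-1}$, so the Carleman parameters $s$ and $\lambda$ must be raised by a fixed power at each step; tracking these powers together with the accumulated $L^\infty$-norms of the coefficients, and then optimizing $s$ and $\lambda$ in $T$ and in these norms, leads precisely to the fractional exponents $\tfrac{2}{3(j-i)+3}$ and $\tfrac{2}{3(j-i)+1}$ appearing in $K$, while the remaining linear-in-$T$ contributions come from the energy estimate. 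Plugging the resulting observability inequality into the duality argument then yields $\widehat{u}$ with the announced cost \eqref{1.2201}.
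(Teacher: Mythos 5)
Your proposal follows essentially the same route as the paper: duality reduces the problem to an observability inequality for the forward adjoint system, which is obtained from a Carleman estimate for a single forward stochastic parabolic equation with $H^{-1}$ source terms (applied component by component), followed by an iterative elimination of the local integrals of $z_2,\dots,z_n$ using the sign condition \eqref{asspmt} on $a_{i,i-1}$ and a weighted It\^o pairing of the $z_{i-1}$-equation against $z_i$, an energy/Gronwall estimate, and a penalization (HUM) argument for the control cost. This matches the paper's structure (Theorem \ref{thm1.1}, Lemma \ref{lm1.1}, Lemma \ref{lmm2.3}, Proposition \ref{propo1.1}) in all essential respects.
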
 
\begin{rmk}
The null controllability of some particular cases of system \eqref{1.1} has been studied in the following works:
    \begin{enumerate}[1.]
        \item In \cite{LiQi}, with
        \begin{align*}
            n = 2, \qquad C = 0, \qquad \beta_{ij}^k(t,x) = \delta_{ij} = 
            \begin{cases}
                1, & i = j, \\
                0, & i \neq j.
            \end{cases}, \qquad B = \textnormal{diag}(b_1, b_2).
        \end{align*}
 \item In \cite{Fadilicasc}, with
        \begin{align*}
            n \geq 2, \qquad C = 0, \qquad \beta_{ij}^k(t,x) = \delta_{ij}, \qquad B = \textnormal{diag}(b_1, b_2, \ldots, b_n).
        \end{align*}
    \end{enumerate}
Unlike in \cite{Fadilicasc,LiQi}, this paper addresses the controllability of more general backward stochastic coupled reaction-convection-diffusion equations of the form \eqref{1.1}, under the assumptions \eqref{opLk} and \eqref{assmAC}. Furthermore, to the authors' knowledge, this is the first work that provides an estimate (the estimate \eqref{1.2201}) for the cost of the null controllability of system \eqref{1.1} with explicit dependence on the final time \(T\) and the potentials \(a_{ij}\), \(b_{ij}\), and \(C_{ij}\) (with \(1 \leq i\leq j \leq n\)). Therefore, it is important to study the optimality of the constant \(\exp(C_0 K)\) in \eqref{1.2201}.
\end{rmk}

The null controllability property has been extensively studied for deterministic parabolic equations, with numerous results in the literature addressing this issue. For instance, we refer to \cite{ammBDG, coron07, fursikov1996controllability, GonBurTer, imanuvilov2003carleman, lions1972some, Zab95} for key findings in this area. In the context of deterministic coupled parabolic systems, the survey article \cite{surveyAmmarKBGT} provides an overview of various findings related to the controllability of such equations. Additionally, in \cite{GonBurTer}, the authors establish null controllability for a general class of cascade parabolic systems using a single control force. For further details and additional results concerning the controllability of fully coupled systems, we refer to \cite{ammBDG,GuerreroS}.

In the stochastic setting, some results have been established regarding the controllability of stochastic systems; we refer to the book \cite{lu2021mathematical} and the references therein. The case of forward and backward stochastic parabolic equations is also widely studied. In \cite{barbu2003carleman}, the authors presented some controllability results for a class of stochastic heat equations. Furthermore, \cite{tang2009null} was the first to establish null controllability for general forward and backward stochastic parabolic equations with Dirichlet boundary conditions, deriving a stochastic version of Carleman estimates through a weighted identity method. For an improved Carleman inequality for forward stochastic parabolic equations, we refer to \cite{liu2014global}, which employs a duality method and gives controllability for backward stochastic parabolic equations with bounded zero-order potentials. Additionally, \cite{convectermsEBBm} discusses controllability results for more general stochastic parabolic equations, incorporating zero and first-order terms. Controllability for the case of Robin boundary conditions has also been explored in \cite{RobinBCelg, yan2018carleman}. For the case of dynamic boundary conditions, referred to as Wentzell boundary conditions, see \cite{SPEwithDBC, BackSPEwithDBC}. The controllability of general forward stochastic parabolic equations remains an open problem; for more details on this issue, see \cite{convectermsEBBm, lu2021mathematical, tang2009null}. In the existing literature, only partial results have been established concerning the controllability of forward stochastic parabolic equations when the potentials are space-independent; for that, see e.g., \cite{lu2011some, observineqback}.

For coupled stochastic parabolic equations, there are limited results concerning the controllability of forward and backward stochastic parabolic systems. In \cite{LiQi}, the authors investigated the null controllability of a system of two coupled backward stochastic parabolic equations with one control. Subsequently, \cite{Fadilicasc} generalized this work and established the controllability of a cascade system of \(n\) coupled backward stochastic parabolic equations with \(n \geq 2\). In \cite{Fadilicasc, LiQi}, the equations incorporate only reaction terms and Laplacian operators.  There are also other controllability results for some coupled stochastic parabolic equations. We refer to \cite{wangNull24} for coupled fourth-order backward stochastic parabolic equations, \cite{LiuuLiuX} for a forward stochastic parabolic system, and \cite{liu14couplfor} for coupled fractional 
 stochastic parabolic equations.  Additionally, \cite{liu2014global, yan20111} studied insensitizing control problems as a controllability issue for a suitable cascade system of coupled forward and backward stochastic parabolic equations.

To the best of our knowledge, the present paper is the first to address the controllability of a more general framework of cascade systems of coupled backward stochastic parabolic equations that includes general second-order parabolic operators as well as both reaction and convection terms. Moreover, we establish the estimate \eqref{1.2201} for the null control cost of system \eqref{1.1}.\\

To prove the null controllability of \eqref{1.1}, we introduce the following adjoint equation
\begin{equation}\label{1.1Abstractadjoint}
\begin{cases}
\begin{array}{ll}
dz-L(t)z\,dt=\left(-A^*z+\nabla\cdot(C^*z)\right)\,dt-B^*z \,dW(t)&\textnormal{in}\,\,Q,\\
z=0&\textnormal{on}\,\,\Sigma,\\
z(0)=z^0&\textnormal{in}\,\,G,
			\end{array}
		\end{cases}
\end{equation}
where $z=(z_1,...,z_n)$ is the state variable, $z^0\in L^2_{\mathcal{F}_0}(\Omega;L^2(G;\mathbb{R}^n))$ is the initial state and $A^*$ (resp., $B^*$ and $C^*$) is the transpose of $A$ (resp., $B$ and $C$). 

Under the assumption \eqref{assmAC}, it is easy to see that the above adjoint system \eqref{1.1Abstractadjoint} becomes
\begin{equation}\label{1.1Adjoint}
\begin{cases}
\begin{array}{ll}
dz_i-L_i(t)z_i\,dt=\left[-\displaystyle\sum_{j=1}^i \left(a_{ji}z_j-\nabla\cdot(C_{ji}z_j)\right)-a_{i+1,i}z_{i+1}\right]\,dt-\displaystyle\sum_{j=1}^i b_{ji}z_j\,dW(t)&\textnormal{in}\,\,Q,\\
...\hspace{9.4cm}(1\leq i\leq n-1),\\
dz_n-L_n(t)z_n\,dt=\left[-\displaystyle\sum_{j=1}^n \left(a_{jn}z_j-\nabla\cdot(C_{jn}z_j)\right)\right]\,dt-\displaystyle\sum_{j=1}^n b_{jn}z_j \,dW(t)&\textnormal{in}\,\,Q,\\
z_i=0,\quad 1\leq i\leq n&\textnormal{on}\,\,\Sigma,\\
z_i(0)=z_{0,i},\quad 1\leq i\leq n&\textnormal{in}\,\,G.
			\end{array}
		\end{cases}
\end{equation}
From \cite{krylov1994}, for any $z^0=(z_{0,i})_{1\leq i\leq n}\in L^2_{\mathcal{F}_0}(\Omega;L^2(G;\mathbb{R}^n))$, the system \eqref{1.1Adjoint} has a unique weak solution
$$z=(z_i)_{1\leq i\leq n}\in L^2_\mathcal{F}(\Omega;C([0,T];L^2(G;\mathbb{R}^n)))\bigcap L^2_\mathcal{F}(0,T;H^1_0(G;\mathbb{R}^n)).$$
Moreover, there exists a positive constant $\mathcal{C}$ so that
\begin{align*}
&\,|z|_{L^2_\mathcal{F}(\Omega;C([0,T];L^2(G;\mathbb{R}^n)))}+|z|_{L^2_\mathcal{F}(0,T;H^1_0(G;\mathbb{R}^n))}\leq \mathcal{C}|z^0|_{L^2_{\mathcal{F}_0}(\Omega;L^2(G;\mathbb{R}^n))}.
\end{align*}

By the classical duality argument, we know that the null controllability of \eqref{1.1} (with adapted $L^2$-controls) is equivalent to the following observability inequality of \eqref{1.1Adjoint}.
\begin{prop}\label{propo1.1}
There exists a constant $C_0>0$ such that for every $z^0\in L^2_{\mathcal{F}_0}(\Omega;L^2(G;\mathbb{R}^n))$, the associated solution $z=(z_1,...,z_n)$ of system \eqref{1.1Adjoint} satisfies that
\begin{align}\label{observineqqke}
\mathbb{E}\int_{G} |z(T,x)|^2 dx\leq \exp(C_0K) \,\mathbb{E}\int_{Q_0} |z_1|^2 \,dx\,dt,
\end{align}
where $C_0$ and $K$ are the same constants as in \eqref{1.2201}.
\end{prop}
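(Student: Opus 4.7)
The plan is a cascade induction on the index $i$ of \eqref{1.1Adjoint}, driven by the new global Carleman estimate for forward stochastic parabolic equations with $H^{-1}$ source terms that the abstract announces. First, I would fix a nested chain of open sets $\widetilde{G}_0 \Subset \omega_n \Subset \omega_{n-1} \Subset \cdots \Subset \omega_1 \Subset G_0$ on each of which the sign condition \eqref{asspmt} still holds, together with a family of standard Carleman weights $\theta_i = e^{s_i \varphi}$ whose parameters $s_i, \lambda_i$ will be tuned later, with $s_1, \lambda_1$ much larger than $s_2, \lambda_2$, etc. For each $i$ I would apply the Carleman estimate to the $i$-th equation of \eqref{1.1Adjoint}, treating the terms $\sum_{j=1}^{i}\bigl(a_{ji}z_j - \nabla\!\cdot(C_{ji}z_j)\bigr)$ and $\sum_{j=1}^{i}b_{ji}z_j$ as $H^{-1}$- and $L^2$-source terms respectively (the $H^{-1}$-tolerant estimate is exactly what is needed to absorb the divergence coming from the convection $C_{ji}z_j$). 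This produces, for every $i$, a weighted global bound on $z_i$ by (a) a local integral of $z_i$ on $\omega_i$, (b) lower-order global terms in $z_1,\dots,z_i$ that can be absorbed by choosing $s_i$ large, and (c) a term in $z_{i+1}$ coming from the cascade coupling $a_{i+1,i}z_{i+1}$ (present only for $i<n$).

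The key step is then to bound each local integral $\int_{\omega_{i+1}}|z_{i+1}|^2$ by quantities involving only $z_i$ on $\omega_i$. Using \eqref{asspmt}, on $\omega_{i+1}$ one can write
\begin{equation*}
a_{i+1,i}\,z_{i+1} \;=\; -L_i(t)z_i \;-\; \sum_{j=1}^{i}\bigl(a_{ji}z_j - \nabla\!\cdot(C_{ji}z_j)\bigr) \;+\; \partial_t(\text{drift correction}) \;+\; \text{It\^o correction in $dW$},
\end{equation*}
multiply by a cutoff $\eta_i \in C_c^\infty(\omega_i)$ with $\eta_i \equiv 1$ on $\omega_{i+1}$ and an appropriate Carleman weight, then apply It\^o's formula to $\eta_i^2 \theta_i^2 z_{i+1}^2$ and integrate by parts in $x$ to transfer $L_i$ onto $\eta_i^2\theta_i^2 z_{i+1}$. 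This converts the local norm of $z_{i+1}$ into a local norm of $z_i$, its gradient, and boundary-free stochastic integrals whose expectations vanish; the $H^{-1}$-nature of $\nabla\!\cdot(C_{ji}z_j)$ is handled because these terms appear through duality after integrating against the cutoff.

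I would then iterate from $i = n$ downward to $i = 1$. At each step the lower-order contributions produced by the cascade inversion are absorbed into the Carleman left-hand side of the previous index by choosing the ratios $\lambda_{i-1}/\lambda_i, s_{i-1}/s_i$ large enough, with the crucial observation that each inversion costs one extra power of $s^{3/2}$ (one time derivative and one second-order spatial derivative of $z_i$). Tracking the explicit dependence on $s_i$, $\lambda_i$, $T$ and on $|a_{ij}|_\infty, |C_{ij}|_\infty, |b_{ij}|_\infty$, then optimising in $s_i, \lambda_i$ as powers of the potentials with exponents jumping by three at every level of the cascade, produces exactly the exponents $\frac{2}{3(j-i)+3}$, $\frac{2}{3(j-i)+1}$ and the structure of the constant $K$ in \eqref{1.2201}. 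A final removal of the Carleman weights at $t = 0$ and $t = T$ via the standard dissipation estimate for \eqref{1.1Adjoint} yields the observability inequality \eqref{observineqqke}.

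The main obstacle will be the cascade inversion step combined with explicit cost tracking: the algebraic manipulation using \eqref{asspmt} is in itself routine, but inverting it inside a stochastic Carleman framework produces $H^{-1}$-source terms (because of the $\nabla\!\cdot(C_{ji}z_j)$ and because each inversion differentiates $z_i$ once more), and these terms must be reabsorbed by the Carleman weight of the previous index without destroying the explicit polynomial dependence on $|a_{ij}|_\infty, |C_{ij}|_\infty, |b_{ij}|_\infty$ that the statement requires. Proving that the optimal Carleman parameters $s_i, \lambda_i$ have exactly the scaling needed to produce the exponents in $K$, uniformly as the cascade unrolls, is the delicate bookkeeping that dominates the proof.
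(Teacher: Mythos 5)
Your overall architecture is the one the paper uses: a componentwise global Carleman estimate for \eqref{1.1Adjoint} with the convection terms treated as $H^{-1}$ sources, a local ``cascade inversion'' that trades a local integral of $z_{i+1}$ for local integrals of $z_i$ by exploiting \eqref{asspmt}, an iteration from $i=n$ down to $i=1$, and a final removal of the weights via the energy/Gronwall estimate on $(T/4,3T/4)$. (In the paper this is split: the Carleman estimate and the cascade inversion are Theorem \ref{thmm1.2} and Lemma \ref{lmm2.3}, and the proof of Proposition \ref{propo1.1} proper is only the last step.) However, two concrete points in your plan would fail as written.

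First, you propose a family of weights $\theta_i=e^{s_i\varphi}$ with genuinely different exponential rates, $s_1\gg s_2\gg\cdots$. With distinct rates the ratio $\theta_{i+1}^2/\theta_i^2=e^{2(s_{i+1}-s_i)\varphi}$ is unbounded on $Q$ (the weight $\alpha$ tends to $-\infty$ at $t=0,T$), so a global term weighted by $\theta_{i+1}^2$ can never be absorbed into a left-hand side weighted by $\theta_i^2$; the iteration cannot close. The paper instead keeps a \emph{single} exponential weight $\theta=e^{\lambda\alpha}$ and grades the equations polynomially, applying Lemma \ref{lm1.1} to $z_i$ with exponent $d=3(n+1-i)$; it is this polynomial grading in $(\lambda\gamma)^d$, not a hierarchy of exponential rates, that produces the exponents $\tfrac{2}{3(j-i)+3}$ and $\tfrac{2}{3(j-i)+1}$ in $K$. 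Second, your inversion step isolates $a_{i+1,i}z_{i+1}$ ``pointwise'' from the drift of the $i$-th equation and then applies It\^o's formula to $\eta_i^2\theta_i^2 z_{i+1}^2$. The drift cannot be isolated pointwise from a stochastic differential, and It\^o applied to $z_{i+1}^2$ only produces the energy identity for $z_{i+1}$, in which the coupling coefficient $a_{i+1,i}$ does not appear with a sign. The correct object is the weighted \emph{cross} product: the paper computes $d\langle z_{k-1}, v\xi z_k\rangle_{L^2(G)}$ with $v=(\lambda\gamma)^l\theta^2$, integrates over $(0,T)$ and takes expectations, so that $\mathbb{E}\int_Q v\xi a_{k,k-1}|z_k|^2\,dx\,dt$ emerges as the signed term and everything else ($J_1,\dots,J_9$, including the $H^{-1}$ convection contributions and an auxiliary Caccioppoli-type estimate to absorb the local $|\nabla z_{k-1}|^2$ term) is estimated against it. With these two corrections your plan becomes essentially the paper's proof.
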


To prove the observability inequality \eqref{observineqqke}, the key tool is to establish the following new global Carleman estimate for system \eqref{1.1Adjoint}.
\begin{thm}\label{thmm1.2}
Let us assume that \eqref{asspmt}. Then, there exists a large $\mu_0>0$ so that for $\mu=\mu_0$, one can find a constant $C_0>0$ (depending on $G$, $\widetilde{G}_0$, $\mu_0$, $a_0$, $\beta_0$, $M_0$ and $\beta_{ij}^k$) and $l\geq3$ (which depends only on $n$) such that, for any initial state $z^0\in L^2_{\mathcal{F}_0}(\Omega;L^2(G;\mathbb{R}^n))$, the associated solution $z=(z_1,...,z_n)$ of \eqref{1.1Adjoint} satisfies that
    \begin{align}\label{carestimate}
        \sum_{i=1}^n \mathcal{I}(3(n+1-i),z_i)\leq C_0 \lambda^l\mathbb{E}\int_0^T\int_{\widetilde{G}_0} \theta^2\gamma^l|z_1|^2 \,dx\,dt,
    \end{align}
    for any     $$\lambda\geq\lambda_0=C_0\left(T+T^2+T^2\max_{i\leq j}\left(|a_{ij}|^{\frac{2}{3(j-i)+3}}_\infty+|C_{ij}|^{\frac{2}{3(j-i)+1}}_\infty+|b_{ij}|^{\frac{2}{3(j-i)+1}}_\infty\right)\right).$$ 
    With
    $$\mathcal{I}(d,z_i)=\lambda^d\mathbb{E}\int_Q \theta^2\gamma^d|z_i|^2\,dx\,dt + \lambda^{d-2}\mathbb{E}\int_Q \theta^2\gamma^{d-2}\vert\nabla z_i\vert^2\,dx\,dt,$$
    where $\theta$ and $\gamma$ are the weight functions defined in \eqref{2.2012}.
\end{thm}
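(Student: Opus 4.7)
The plan is to deduce \eqref{carestimate} by iterating a single-equation global Carleman estimate through the cascade structure of \eqref{1.1Adjoint} and using the nondegeneracy \eqref{asspmt} to transfer local observations from each $z_i$ down to $z_{i-1}$. The choice $d_i=3(n+1-i)$ is dictated by the fact that each transfer step consumes exactly three powers of the weight $\gamma$: one from the time derivative of $\theta^2\gamma^{d_i}$, one from dualizing the second-order operator $L_{i-1}$, and one from reducing the convection divergence $\nabla\cdot(C_{j,i-1}z_j)$.

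First I would apply to each equation of \eqref{1.1Adjoint} the single-equation Carleman estimate with sources in $H^{-1}$ announced in the introduction. For equation $i$ with weight exponent $d_i$, this bounds $\mathcal{I}(d_i,z_i)$ by a local $L^2$-observation of $z_i$ on $\widetilde{G}_0$ plus weighted $L^2$-norms of the drift $f_0^{(i)}=-\sum_{j\le i}a_{ji}z_j-a_{i+1,i}z_{i+1}$, the flux $F^{(i)}=\sum_{j\le i}C_{ji}z_j$ (entering with weight $\theta^2\gamma^{d_i}$ thanks to the $H^{-1}$ treatment), and the diffusion $g^{(i)}=-\sum_{j\le i}b_{ji}z_j$. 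Summing over $i$ and choosing $\lambda$ large, each cross-term $|\alpha|_\infty^2\,\lambda^{d_i-p}\theta^2\gamma^{d_i-p}|z_j|^2$ (with $p\in\{0,1,2\}$ for $C$-, $b$- or $a$-type coefficients) is absorbed into $\mathcal{I}(d_j,z_j)$ because $j\le i$ implies $d_j\ge d_i$, while the downward coupling $a_{i+1,i}z_{i+1}$ is absorbed by $\mathcal{I}(d_{i+1},z_{i+1})$ since $d_{i+1}=d_i-3\le d_i-2$. Balancing the coefficient $L^\infty$-norms against the relevant power of $\lambda\gamma$ at each cross-term produces exactly the threshold $\lambda_0$ stated in the theorem.

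It remains to handle the residual $\sum_{i=1}^n C_0\lambda^{d_i}\mathbb{E}\int_0^T\!\!\int_{\widetilde{G}_0}\theta^2\gamma^{d_i}|z_i|^2$ by eliminating the observations of $z_i$ for $i\ge 2$. Fix a nested family $V_n\Subset V_{n-1}\Subset\cdots\Subset V_1=\widetilde{G}_0$ and cutoff functions $\eta_i\in C_c^\infty(V_{i-1})$ with $\eta_i\equiv 1$ on $V_i$. From equation $i-1$ and \eqref{asspmt} I isolate
\begin{equation*}
a_{i,i-1}z_i\,dt = -dz_{i-1}-L_{i-1}z_{i-1}\,dt - \sum_{j=1}^{i-1}\bigl(a_{j,i-1}z_j-\nabla\cdot(C_{j,i-1}z_j)\bigr)dt - \sum_{j=1}^{i-1}b_{j,i-1}z_j\,dW(t),
\end{equation*}
divide by $a_{i,i-1}$ on $\widetilde{G}_0$, multiply by $\theta^2\gamma^{d_i}\eta_i^2 z_i$, apply It\^{o}'s formula, and integrate by parts in space to dualize $L_{i-1}$ and the divergence onto the weight. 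A Cauchy--Schwarz and Young absorption then yields
\begin{equation*}
\lambda^{d_i}\mathbb{E}\int_0^T\!\!\int_{V_i}\theta^2\gamma^{d_i}|z_i|^2\,dx\,dt \le \tfrac{1}{2}\sum_{j\le i-1}\mathcal{I}(d_j,z_j) + C_0\lambda^{d_{i-1}}\mathbb{E}\int_0^T\!\!\int_{V_{i-1}}\theta^2\gamma^{d_{i-1}}|z_{i-1}|^2\,dx\,dt.
\end{equation*}
Iterating this from $i=n$ down to $i=2$ absorbs the first term on the right into the left-hand side of the summed Carleman inequality, leaving only a local observation of $z_1$ at weight $\gamma^{3n}$ on $V_1=\widetilde{G}_0$, which is \eqref{carestimate} with $l=3n$.

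The main obstacle I expect is the simultaneous handling of the stochastic It\^{o} corrections and the divergence/convection sources. The It\^{o} quadratic-variation produced during each transfer step generates extra $|b_{j,i-1}|_\infty^2|z_j|^2$ contributions that must be re-absorbed into $\mathcal{I}$, forcing the $|b_{ij}|$-dependent exponents in $\lambda_0$. At the same time, the single-equation Carleman must accommodate the divergence source $\nabla\cdot F$ \emph{without} an additional derivative loss; otherwise the required three-unit exponent gap $d_i-d_{i+1}=3$ would be unattainable and the whole cascade iteration would break down. Reconciling these two constraints, and carefully balancing the cascade depth $n+1-i$ against the order at which each coefficient enters the right-hand side, is the technically delicate part of the argument.
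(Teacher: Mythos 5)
Your overall architecture is the same as the paper's: apply the $H^{-1}$-source Carleman estimate to each component with exponent $d_i=3(n+1-i)$, sum and absorb the couplings for $\lambda\geq\lambda_0$ (your balancing of $|a_{ij}|_\infty$, $|C_{ij}|_\infty$, $|b_{ij}|_\infty$ against $(\lambda\gamma)^{d_j-d_i+p}$ reproduces the stated threshold correctly), and then eliminate the local observations of $z_i$, $i\geq 2$, by a cutoff-weighted duality computation that exploits $a_{i,i-1}\geq a_0$; your ``isolate $a_{i,i-1}z_i$ and pair with $\theta^2\gamma^{d_i}\eta_i^2 z_i$'' is, after the It\^o product rule, exactly the paper's computation of $d\langle z_{i-1}, (\lambda\gamma)^{l}\theta^2\xi z_i\rangle_{L^2(G)}$.

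There is, however, one concrete step in your transfer inequality that fails as written. When you dualize $L_{i-1}$ (and the convection terms) onto the test function, Young's inequality leaves a \emph{local weighted gradient} term of the form $\lambda^{\sigma}\mathbb{E}\int_Q\theta^2\gamma^{\sigma}\eta_i^2|\nabla z_{i-1}|^2\,dx\,dt$ with $\sigma=d_i+2$, whereas the gradient part of $\mathcal{I}(d_{i-1},z_{i-1})$ carries the exponent $d_{i-1}-2=d_i+1$. Being one power of $\lambda\gamma$ too high, this term cannot be absorbed into $\tfrac12\sum_{j\leq i-1}\mathcal{I}(d_j,z_j)$, nor is it an $L^2$ observation that the next iteration step can digest. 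The paper removes it by an additional local energy (Caccioppoli-type) estimate: applying It\^o's formula to $(\lambda\gamma)^{\sigma}\theta^2\xi|z_{i-1}|^2$ and using the ellipticity \eqref{assmponalpha} to bound $\lambda^{\sigma}\int\theta^2\gamma^{\sigma}\xi|\nabla z_{i-1}|^2$ by local $L^2$ observations of $z_1,\dots,z_{i-1}$ (at higher powers of $\gamma$) plus a small multiple of the $z_i$ term being estimated. Your proposal needs this extra step. Two smaller bookkeeping points: the It\^o product rule brings $dz_i$, hence an $\varepsilon\,\mathcal{I}(d_{i+1},z_{i+1})$ contribution and the quadratic covariation, into the right-hand side of the transfer inequality; and because several cross terms (e.g.\ the one from $\partial_t(\theta^2\gamma^{d_i})$) produce local observations of $z_{i-1}$ at exponent $d_i+4>d_{i-1}$, the weight powers necessarily grow faster than $3$ per step, so the final exponent $l$ exceeds $3n$ --- harmless for the statement, which only asks for some $l\geq 3$ depending on $n$, but your claimed $l=3n$ is not attainable by this scheme.
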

From the above Carleman estimate \eqref{carestimate}, one can easily deduce the following unique continuation property for solutions of \eqref{1.1Adjoint}.
\begin{cor}\label{corr1}
Suppose that the assumption \eqref{asspmt} holds. Then any solution \( z = (z_1, \ldots, z_n) \) of system \eqref{1.1Adjoint} fulfills  that
\[
z_1 = 0 \;\;\textnormal{in}\;\; Q, \;\;\mathbb{P}\textnormal{-a.s.,} \quad \Longrightarrow \quad z = 0 \;\;\textnormal{in}\;\; Q, \;\;\mathbb{P}\textnormal{-a.s.}
\]
\end{cor}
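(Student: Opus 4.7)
The plan is to derive the unique continuation statement as an immediate consequence of the global Carleman estimate \eqref{carestimate} established in Theorem \ref{thmm1.2}. Given a solution $z = (z_1, \ldots, z_n)$ of the adjoint system \eqref{1.1Adjoint} and assuming $z_1 = 0$ in $Q$, $\mathbb{P}$-a.s., I would fix $\mu = \mu_0$ and any $\lambda \geq \lambda_0$, and simply apply \eqref{carestimate} to $z$. Under the hypothesis, the observation term
$$\mathbb{E}\int_0^T\int_{\widetilde{G}_0} \theta^2\gamma^l |z_1|^2\,dx\,dt$$
is identically zero, so the right-hand side of \eqref{carestimate} vanishes.

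The left-hand side of \eqref{carestimate} is a sum of the nonnegative quantities $\mathcal{I}(3(n+1-i), z_i)$, and each $\mathcal{I}(d, z_i)$ is itself a sum of two nonnegative weighted $L^2$-integrals. Hence every such term must vanish, which gives in particular
$$\lambda^{3(n+1-i)}\,\mathbb{E}\int_Q \theta^2\gamma^{3(n+1-i)}|z_i|^2\,dx\,dt = 0, \qquad 1 \leq i \leq n.$$
Because the weight functions $\theta$ and $\gamma$ defined in \eqref{2.2012} are strictly positive (although singular at $t=0$ and $t=T$) on the interior of $Q$, the vanishing of these weighted norms forces $z_i = 0$ almost everywhere in $Q$, $\mathbb{P}$-a.s., for every $1 \leq i \leq n$. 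This yields $z \equiv 0$ in $Q$, $\mathbb{P}$-a.s., as claimed.

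The argument involves no real obstacle once Theorem \ref{thmm1.2} is in hand; all the genuine work, namely absorbing the cascade structure \eqref{assmAC} and treating the convection and $H^{-1}$-type terms through carefully tuned powers $3(n+1-i)$ of the weight $\gamma$, has already been invested in the proof of the Carleman estimate. The only point requiring explicit mention is the standard fact that the Carleman weight is strictly positive inside $(0,T)\times G$, which legitimates passing from the vanishing of the weighted $L^2$-norms to the pointwise almost everywhere vanishing of each component $z_i$.
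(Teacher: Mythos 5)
Your proof is correct and is exactly the deduction the paper intends: the corollary is stated as an immediate consequence of the Carleman estimate \eqref{carestimate}, and your argument (vanishing right-hand side forces each nonnegative term $\mathcal{I}(3(n+1-i),z_i)$ to vanish, and strict positivity of $\theta^2\gamma^d$ on the interior of $Q$ then gives $z_i=0$ a.e.) is the standard one-line justification the authors leave implicit.
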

As a direct consequence of Corollary \ref{corr1} and by employing a duality argument, we also obtain the following approximate controllability result for system \eqref{1.1}.
\begin{prop}
Assume that \eqref{asspmt} holds. Then, for any terminal state \( y^T \in L^2_{\mathcal{F}_T}(\Omega;L^2(G;\mathbb{R}^n)) \), any initial state \( y^0 \in L^2_{\mathcal{F}_0}(\Omega;L^2(G;\mathbb{R}^n)) \), and all \( \varepsilon > 0 \), there exists a control \( \widehat{u} \in L^2_\mathcal{F}(0,T;L^2(G_0)) \) such that the corresponding solution \( (\widehat{y}, \widehat{Y}) \) of \eqref{1.1} satisfies that
\[
\mathbb{E}\left|\widehat{y}(0) - y^0\right|^2_{L^2(G;\mathbb{R}^n)} \leq \varepsilon.
\]
\end{prop}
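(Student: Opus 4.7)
The plan is to follow the classical duality strategy for approximate controllability: construct the control by minimizing a convex dual functional on $\mathcal H:=L^2_{\mathcal F_0}(\Omega;L^2(G;\mathbb R^n))$, obtain its coercivity through the unique continuation property encoded in Corollary \ref{corr1}, and extract the control via the It\^o duality between \eqref{1.1} and \eqref{1.1Adjoint}. By linearity I would first reduce to zero terminal data: writing the controlled solution as $\widehat y = y_0 + \widetilde y$, with $y_0$ the free solution ($u\equiv 0$) having terminal state $y^T$ and $\widetilde y$ solving \eqref{1.1} with zero terminal data and control $\widehat u$, and setting $\xi^0 := y^0 - y_0(0)\in\mathcal H$, the problem becomes: find $\widehat u$ such that $\mathbb E|\widetilde y(0)-\xi^0|^2_{L^2(G;\mathbb R^n)}\le\varepsilon$.

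Next I would introduce the functional
\begin{equation*}
J_\varepsilon(\varphi) \;=\; \tfrac12\,\mathbb E\int_{Q_0}|z_1|^2\,dx\,dt \;+\; \sqrt{\varepsilon}\,|\varphi|_{\mathcal H} \;-\; \mathbb E\int_G(\xi^0,\varphi)_{\mathbb R^n}\,dx,\qquad \varphi\in\mathcal H,
\end{equation*}
where $z=(z_1,\dots,z_n)$ denotes the solution of \eqref{1.1Adjoint} with initial datum $z(0)=\varphi$. This $J_\varepsilon$ is continuous and convex; the non-differentiability of the norm term at the origin is precisely what produces the desired $\varepsilon$-estimate at the end.

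The key step is to establish coercivity. Take $|\varphi_k|_{\mathcal H}\to\infty$, set $\psi_k:=\varphi_k/|\varphi_k|_{\mathcal H}$, and let $z^k$ be the associated adjoint solution. If $\liminf_k \mathbb E\int_{Q_0}|z^k_1|^2\,dx\,dt\big/|\varphi_k|_{\mathcal H}^2>0$, the first term of $J_\varepsilon$ is quadratic in $|\varphi_k|_{\mathcal H}$ and forces $J_\varepsilon(\varphi_k)\to\infty$. Otherwise, up to a subsequence $\psi_k\rightharpoonup\psi$ weakly in $\mathcal H$, and by linearity together with the continuous-dependence estimate following \eqref{1.1Adjoint}, the limit $\widetilde z$ with $\widetilde z(0)=\psi$ satisfies $\mathbb E\int_{Q_0}|\widetilde z_1|^2\,dx\,dt=0$, i.e.\ $\widetilde z_1\equiv 0$ in $Q_0$, $\mathbb P$-a.s. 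Inserting this into the Carleman inequality \eqref{carestimate} makes its right-hand side vanish and forces $\widetilde z\equiv 0$ in $Q$, so $\psi=0$. Therefore $\mathbb E\int_G(\xi^0,\psi_k)_{\mathbb R^n}\,dx\to 0$ and $J_\varepsilon(\varphi_k)/|\varphi_k|_{\mathcal H}\ge\sqrt\varepsilon+o(1)$. In either case $J_\varepsilon$ is coercive and attains a minimum at some $\widehat\varphi\in\mathcal H$.

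Finally, the first-order condition at $\widehat\varphi$ (a subdifferential inclusion when $\widehat\varphi=0$) combined with the It\^o duality identity
\begin{equation*}
\mathbb E\int_G\bigl(\widetilde y(0),\varphi\bigr)_{\mathbb R^n}\,dx \;=\; \mathbb E\int_{Q_0}\widehat z_1\, z_1\,dx\,dt,
\end{equation*}
derived by applying It\^o's formula to $\sum_i y_i z_i$ with $\widehat u:=-\widehat z_1\chi_{G_0}$ and $\widehat z$ the adjoint solution starting from $\widehat\varphi$, yields upon testing with $\varphi=\widetilde y(0)-\xi^0$ the bound $\mathbb E|\widetilde y(0)-\xi^0|^2_{L^2(G;\mathbb R^n)}\le\varepsilon$. (The degenerate case $\widehat\varphi=0$ forces $|\xi^0|_{\mathcal H}\le\sqrt\varepsilon$ and is settled by $\widehat u\equiv 0$.) The main obstacle is coercivity, where Corollary \ref{corr1} — itself a consequence of the Carleman estimate of Theorem \ref{thmm1.2} — is indispensable; the remainder is standard convex analysis and stochastic duality.
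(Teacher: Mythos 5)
Your proof is correct and follows the route the paper intends but does not write out: the classical Fabre--Puel--Zuazua variational functional whose coercivity rests on unique continuation, combined with the It\^o duality between \eqref{1.1} and \eqref{1.1Adjoint}. You are also right to invoke the Carleman estimate \eqref{carestimate} directly rather than Corollary \ref{corr1} verbatim, since coercivity needs the localized unique continuation property ($z_1=0$ in $Q_0$ forces $z\equiv 0$ in $Q$), which is stronger than the corollary as literally stated.
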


Theorem \ref{thmm1.2} will be the key tool for establishing the observability inequality \eqref{observineqqke} and, subsequently, our null controllability result in Theorem \ref{thm01.1}. To prove Theorem \ref{thmm1.2}, we will first derive a general global Carleman inequality for solutions of forward stochastic parabolic equations with a general parameter $d\in\mathbb{R}$ and a drift term in the negative Sobolev space, specifically in \( L^2_\mathcal{F}(0,T,H^{-1}(G)) \). Subsequently, we prove an intermediate result in Lemma \ref{lmm2.3}, where we establish an estimate for each component \( z_k \) (\(2 \leq k \leq n\)) of the solution \( z \) of system \eqref{1.1Adjoint} with respect to the previous components \( z_i \) (\(1 \leq i \leq k-1\)). Therefore, by combining this intermediate result and our previous general Carleman estimate, we will finally prove the Carleman estimate \eqref{carestimate}.\\

Now, some remarks are in order.
\begin{rmk}
The assumption \eqref{asspmt} is a sufficient condition for the null controllability result in Theorem \ref{thm01.1}. Therefore, it is essential to investigate when such a condition is also necessary. For details regarding some coupled deterministic parabolic equations, see \cite{surveyAmmarKBGT,GonBurTer}, where it is established that, in the case of constant coefficients, the assumption \eqref{asspmt} is indeed necessary.
\end{rmk}
\begin{rmk}
The assumption \eqref{asspmt} seems to be a very strong condition for controllability. Therefore, it is important to study the case of some weaker conditions of \eqref{asspmt}, such as one of the following assumptions:
\begin{enumerate}[a)]
\item For any $i=2,3,...,n$
\[
a_{i,i-1} \neq 0, \quad \textnormal{in} \quad \widetilde{G}_0 \times (0,T), \;\;\mathbb{P}\textnormal{-a.s.} 
\]
\item There exists a constant $a_0>0$ such that for any $i=2,3,...,n$
\[
|a_{i,i-1}| \geq a_0, \quad \textnormal{in} \quad \widetilde{G}_0 \times (0,T), \;\;\mathbb{P}\textnormal{-a.s.}
\]
\end{enumerate}
For some known results in the deterministic case, using a different technique called the fictitious control method, we refer to \cite{dupLissy}.
\end{rmk}
\begin{rmk}
We have extensively used the form of the cascade system of the adjoint equation \eqref{1.1Adjoint} to derive the observability inequality \eqref{observineqqke}. The problem of the null controllability of \eqref{1.1} is widely open in the case of general coupling matrices \( A \), \( B \), and \( C \) and general control vector $D\in\mathbb{R}^n$. Moreover, it is quite interesting to study the controllability of some coupled systems such as \eqref{1.1} when the control \( u \) is acting on the boundary.
\end{rmk}

The rest of this paper is organized as follows. In the next section, we establish the Carleman estimate \eqref{carestimate}, i.e., Theorem \ref{thmm1.2}. In Section \ref{sec3}, we prove the observability inequality \eqref{observineqqke} and deduce our null controllability result for system \eqref{1.1}, i.e., Theorem \ref{thm01.1}.

\section{Carleman estimates: Proof of Theorem \ref{thmm1.2}}\label{sec2}
This section is devoted to proving the global Carleman estimate \eqref{carestimate}. We first provide the following known lemma. For the proof, see e.g., \cite{coron07,fursikov1996controllability}.
\begin{lm}\label{lm2.1}
For any nonempty open subset $G_1\Subset G$, there exists a function $\psi\in C^4(\overline{G})$ such that
\begin{equation}
 \psi(x)>0 \,\,\;\textnormal{in}\,\,G,\quad\quad\psi(x)=0\,\;\,\textnormal{on}\,\,\Gamma,\quad\quad \vert\nabla\psi(x)\vert>0\,\,\;\textnormal{in}\,\,\overline{G\setminus G_1}.
\end{equation}
\end{lm}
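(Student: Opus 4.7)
The plan is to build $\psi$ in three stages: first produce a function satisfying the positivity and boundary-vanishing properties, then refine it so that its critical set is finite, and finally transport all unwanted critical points into $G_1$ by composing with a suitable diffeomorphism of $\overline{G}$.

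For the first stage, I would take $\psi_0$ to be the solution of the Poisson problem $-\Delta \psi_0 = 1$ in $G$ with $\psi_0 = 0$ on $\Gamma$. Since $\Gamma$ is $C^2$, classical elliptic regularity (actually one needs to bootstrap using the $C^2$ assumption and a local $C^4$-regularization of $\Gamma$, or replace $1$ by a smoother right-hand side) gives $\psi_0 \in C^4(\overline{G})$; the maximum principle yields $\psi_0 > 0$ in $G$; and Hopf's lemma gives $\partial_\nu \psi_0 < 0$ on $\Gamma$, hence $|\nabla \psi_0| > 0$ in a tubular neighborhood $U$ of $\Gamma$. Thus $\psi_0$ already satisfies the first two requirements of the lemma, and the third requirement is satisfied on $U$; the obstruction is only the possible presence of critical points in the compact set $K := \overline{G \setminus (G_1 \cup U)}$.

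For the second stage, I would invoke the density of Morse functions in $C^4(\overline{G})$ (Whitney-type approximation): choose a Morse function $\widetilde\psi$ arbitrarily $C^4$-close to $\psi_0$. By taking the approximation sufficiently fine, we preserve strict positivity on the compact set $\{x \in G : \mathrm{dist}(x,\Gamma) \geq \varepsilon\}$ and preserve the lower bound on $|\nabla \widetilde\psi|$ on a slightly smaller tubular neighborhood $U'$ of $\Gamma$. To guarantee that $\widetilde\psi$ still vanishes exactly on $\Gamma$ and stays strictly positive in $G$, I would carry out the perturbation using the Ansatz $\widetilde\psi = \psi_0 + \rho \cdot h$, where $\rho$ is a smooth cutoff supported well inside $G$ and $h \in C^4(\overline{G})$ is the Morse perturbation; this preserves the boundary behavior exactly. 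Because $\widetilde\psi$ is Morse and $\overline{G}$ is compact, its critical points form a finite set $\{p_1,\ldots,p_m\}$; only those $p_i$ that lie in $K$ remain problematic.

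For the third stage, I would build a $C^4$-diffeomorphism $\Phi : \overline{G} \to \overline{G}$ which is the identity on a neighborhood of $\Gamma$ and which sends each problematic $p_i$ to a chosen point $q_i \in G_1$. Since $G$ is connected, each $p_i$ can be joined to $q_i$ by a smooth path avoiding $\Gamma$, and one can realize $\Phi$ as the time-one map of a compactly supported smooth vector field on $G$ along these paths (using disjoint tubular neighborhoods around the paths and bump functions). Defining $\psi := \widetilde\psi \circ \Phi^{-1}$ yields a function in $C^4(\overline{G})$ with the same boundary values and sign as $\widetilde\psi$, whose critical points are exactly $\Phi(p_i) \in G_1$; hence $|\nabla \psi| > 0$ on $\overline{G\setminus G_1}$. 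The main obstacle is the bookkeeping in this last step: I must ensure simultaneously that $\Phi$ is a diffeomorphism of $\overline{G}$ fixing $\Gamma$ pointwise, that all paths can be chosen disjoint and away from $\Gamma$, and that no new critical points are created in $\overline{G\setminus G_1}$ — the latter being automatic once $\Phi$ is a diffeomorphism, since critical points transform covariantly under composition with diffeomorphisms.
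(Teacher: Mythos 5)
Your construction is essentially the standard Fursikov--Imanuvilov argument that the paper does not reproduce but only cites (\cite{coron07,fursikov1996controllability}): produce a positive function vanishing on $\Gamma$ with nonvanishing normal derivative near the boundary, perturb it (via a cutoff so the boundary behaviour is untouched) to make the critical set finite, and push those critical points into $G_1$ by a diffeomorphism of $\overline{G}$ equal to the identity near $\Gamma$. The only point worth flagging is boundary regularity: with $\Gamma$ merely $C^2$ the Poisson solution is not $C^4(\overline{G})$, so, as you hint, one must smooth the data or settle for the lower regularity that the Carleman computation actually uses --- a discrepancy already present in the statement of the lemma itself.
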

For any parameters $\lambda, \mu\geq1$, we define the following weight functions
\begin{equation}\label{2.2012}
\alpha\equiv\alpha(t,x) = \frac{e^{\mu\psi(x)}-e^{2\mu\vert\psi\vert_\infty}}{t(T-t)},\quad\; \gamma\equiv\gamma(t)=\frac{1}{t(T-t)},\quad\; \theta\equiv\theta(t,x)=e^{\lambda\alpha}.
\end{equation}
It is easy to check that there exists a constant $C_0=C_0(G)>0$ so that for all $(t,x)\in Q$ 
\begin{equation}\label{2.301}
\begin{array}{l}
\gamma(t)\geq \frac{4}{T^2},\qquad\vert\gamma_t(t)\vert\leq C_0T\gamma^2(t),\qquad\vert\gamma_{tt}(t)\vert\leq C_0T^2\gamma^3(t),\\\\
\vert\alpha_t(t,x)\vert\leq C_0Te^{2\mu\vert\psi\vert_\infty}\gamma^2(t),\qquad\vert\alpha_{tt}(t,x)\vert\leq C_0T^2e^{2\mu\vert\psi\vert_\infty}\gamma^3(t).
		\end{array}
	\end{equation}
\subsection{General Carleman estimate for forward stochastic parabolic equations}
Let us consider the following forward stochastic parabolic equation
\begin{equation}\label{3.010}
		\begin{cases}
			\begin{array}{ll}
			dz- L_0(t) z \,dt = (F_0+\nabla\cdot F)\,dt + F_1 \,dW(t)	 &\textnormal{in}\,\,Q,\\
				z=0 &\textnormal{on}\,\,\Sigma,\\
			 z(0)=z_0 &\textnormal{in}\,\,G,
			\end{array}
		\end{cases}
	\end{equation}
where $z_0\in L^2_{\mathcal{F}_0}(\Omega;L^2(G))$, $F_0,F_1\in L^2_\mathcal{F}(0,T;L^2(G))$, $F\in L^2_\mathcal{F}(0,T;L^2(G;\mathbb{R}^N))$, and the operator $L_0$ is given by
$$L_0(t)y=\displaystyle\sum_{i,j=1}^N \frac{\partial}{\partial x_i}\left(\beta_{ij}^0(t,x)\frac{\partial y}{\partial x_j}\right),$$
where $\beta_{ij}^0\in L^\infty_\mathcal{F}(\Omega;C^1([0,T];W^{1,\infty}(G)))$, $\beta_{ij}^0=\beta_{ji}^0$ ($1\leq i,j\leq N$), and there exists a positive constant $\beta^0$ so that 
$$\sum_{i,j=1}^N \beta_{ij}^0\xi_i\xi_j\geq \beta^0|\xi|^2\qquad\textnormal{for any}\quad (\omega,t,x,\xi)\in \Omega\times Q\times\mathbb{R}^N.$$

We have the following Carleman estimate for equation \eqref{3.010}. For the proof, see \cite[Theorem 3.3]{convectermsEBBm}.
\begin{thm}\label{thm1.1}
Let $\mathcal{O}\subset G$ be a nonempty open subset. Then, there exist a large $\mu_0\geq1$ so that for $\mu=\mu_0$, one can find a positive constant $C_1=C_1(G,\mathcal{O},\mu_0,\beta^0,\beta_{ij}^0)$ such that for all  $F_0,F_1\in L^2_\mathcal{F}(0,T;L^2(G))$, $F\in L^2_\mathcal{F}(0,T;L^2(G;\mathbb{R}^N))$ and $z_0\in L^2_{\mathcal{F}_0}(\Omega;L^2(G))$, the corresponding weak solution $z$ of \eqref{3.010} satisfies that for all $\lambda\geq C_1(T+T^2)$
\begin{align}
\begin{aligned}
&\,\lambda^3\mathbb{E}\int_Q \theta^2\gamma^3z^2\,dx\,dt + \lambda\mathbb{E}\int_Q \theta^2\gamma\vert\nabla z\vert^2\,dx\,dt\\
&\leq C_1\Bigg[\lambda^3\mathbb{E}\int_{0}^T\int_\mathcal{O} \theta^2\gamma^3z^2\,dx\,dt+\mathbb{E}\int_Q \theta^2F_0^2\,dx\,dt\\
\label{3.202002}&\quad\quad\,\;+\lambda^2\mathbb{E}\int_Q \theta^2\gamma^2F_1^2\,dx\,dt+\lambda^2\mathbb{E}\int_Q \theta^2\gamma^2\vert F\vert^2\,dx\,dt\Bigg].
\end{aligned}
\end{align}
\end{thm}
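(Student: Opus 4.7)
The plan is to prove Theorem \ref{thm1.1} by the classical weighted-identity (Fursikov--Imanuvilov) method adapted to the stochastic setting, treating the divergence-form drift $\nabla\cdot F$ by integration by parts so that all the "hard" parts of the test function only involve $\nabla z$ multiplied by weights. The three source terms $F_0$, $F$, $F_1$ will end up on the right-hand side with different $\gamma$-powers dictated by how many spatial derivatives land on the test function when the corresponding source is paired against it.

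\textbf{Step 1: Change of variables and Itô expansion.} Set $v=\theta z$ and compute $dv$ by Itô's formula using \eqref{3.010}. Writing $\alpha=\lambda^{-1}\log\theta$, one gets
\begin{equation*}
dv - L_0(t)v\,dt + \lambda\theta\alpha_t z\,dt \;=\; \bigl(\theta F_0 + \theta\nabla\cdot F + \theta\,[\text{first-order terms in }v]\bigr)\,dt + \theta F_1\,dW(t),
\end{equation*}
where the bracketed first-order piece comes from commuting $L_0$ past $\theta$ and absorbing $\nabla\theta$-factors. Split the deterministic first-order operator into $\mathcal P_1 v$ (terms with an even total number of derivatives, including $L_0 v$ and a $\lambda^2|\nabla\psi|^2\mu^2\gamma v$ piece) and $\mathcal P_2 v$ (odd: $v_t$ plus a drift involving $\nabla\psi\cdot\nabla v$), in the usual Carleman decomposition.

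\textbf{Step 2: Pointwise weighted identity and integration.} Multiply the identity $\mathcal P_1 v + \mathcal P_2 v = (\text{RHS})$ by $\mathcal P_1 v$ pointwise, then integrate over $Q$ and take $\mathbb E$. Using Itô's formula on $v^2$ to handle the stochastic differential and Lemma \ref{lm2.1} to generate a positive cut-off region, the cross term $2\langle\mathcal P_1 v,\mathcal P_2 v\rangle$ produces, after several integrations by parts and the choice of a sufficiently large $\mu=\mu_0$, the dominant positive quantities
\begin{equation*}
\lambda^3\mathbb{E}\!\int_Q \theta^2\gamma^3 z^2\,dx\,dt + \lambda\mathbb{E}\!\int_Q \theta^2\gamma|\nabla z|^2\,dx\,dt,
\end{equation*}
modulo an error localized on $\mathcal O$ (which is thrown to the right-hand side) and modulo derivative-of-$\alpha$ lower-order terms controllable under $\lambda\geq C_1(T+T^2)$ via \eqref{2.301}.

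\textbf{Step 3: Handling the three source terms.} The pairing of the source with $\mathcal P_1 v$ contributes, for $F_0$, something of the form $\int\theta F_0\,\mathcal P_1 v$ which is bounded by $\varepsilon\lambda^3\!\int\theta^2\gamma^3 z^2 + C\!\int\theta^2 F_0^2$, absorbing the first term on the left. For the divergence source, I integrate by parts in space to move $\nabla\cdot$ off $F$ and onto $\mathcal P_1 v$; the worst resulting term is $\int\theta F\cdot\nabla(\theta\text{-derivative of }v)$ which, after expanding $\nabla v = \theta\nabla z + (\nabla\theta)z$ and Cauchy--Schwarz with the right weight, gives $\varepsilon\lambda\!\int\theta^2\gamma|\nabla z|^2 + \varepsilon\lambda^3\!\int\theta^2\gamma^3 z^2 + C\lambda^2\!\int\theta^2\gamma^2|F|^2$; this explains the $\gamma^2$-factor on $F$. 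The Itô correction term produces $\mathbb E\!\int\theta^2 F_1^2\,(\mathcal P_1$-coefficient$)$, the largest coefficient being $\lambda^2\gamma^2$ from the zero-order part of $\mathcal P_1$, hence the $\lambda^2\theta^2\gamma^2 F_1^2$ term.

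\textbf{Main obstacle.} The hardest part is the divergence source term $\nabla\cdot F$: one has only $F\in L^2_\mathcal F(0,T;L^2(G;\mathbb R^N))$, so every spatial derivative one puts on $F$ via the naive pairing must be integrated by parts back onto the weighted test function, which in turn has $\nabla v$ pieces of size $\theta\gamma^{1/2}\lambda^{1/2}|\nabla z|$ and $\theta\gamma^{3/2}\lambda^{3/2} z$. Balancing these two contributions to produce exactly a $\lambda^2\gamma^2$ weight on $|F|^2$ (so that it matches the $F_1$ term), while leaving enough of the left-hand energy to absorb the $\varepsilon$-losses, requires the careful choice of $\mu_0$ large first (to make the positive cross term dominate $\mu$-independent errors) and $\lambda\geq C_1(T+T^2)$ second (to dominate the $\alpha_t,\alpha_{tt}$ lower-order terms via \eqref{2.301}). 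Once $\mu_0$ and $\lambda$ are so fixed, collecting all bounds yields \eqref{3.202002}.
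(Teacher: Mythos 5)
First, note that the paper does not prove Theorem \ref{thm1.1} at all: the statement is quoted from \cite[Theorem 3.3]{convectermsEBBm}, so there is no in-paper argument to compare yours against. Your sketch follows the natural direct route (a Fursikov--Imanuvilov weighted identity in the stochastic form of Tang--Zhang), and Steps 1--2, the treatment of $F_0$, and the It\^o quadratic-variation correction producing the $\lambda^2\gamma^2\theta^2F_1^2$ term are all consistent with how such estimates are usually established.

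The genuine gap is in your treatment of the divergence source, which you yourself identify as the main obstacle but do not actually resolve. Your ``even'' operator $\mathcal{P}_1 v$ contains $L_0 v$, i.e.\ second-order spatial derivatives of $v$. When you pair $\theta\,\nabla\cdot F$ against $\mathcal{P}_1 v$ and integrate by parts to unload the derivative from $F$ (which is only $L^2$ in space), that derivative lands on $\theta\,\mathcal{P}_1 v$ and produces third-order derivatives of $v$. These are controlled neither by the left-hand side of \eqref{3.202002} (which contains only $z$ and $\nabla z$) nor by the $\Vert\mathcal{P}_1 v\Vert_{L^2}^2$ term generated by the weighted identity; and in the stochastic setting you cannot trade $L_0 v$ for a time derivative plus lower-order terms, since $dv$ carries a martingale part. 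Hence your claim that ``the worst resulting term'' involves only $\nabla v$ and $v$ with weights $\lambda^{1/2}\gamma^{1/2}\theta$ and $\lambda^{3/2}\gamma^{3/2}\theta$ is unjustified --- this is precisely the point where the $H^{-1}$ Carleman estimate is nontrivial. The known proofs circumvent it either by a duality argument (deriving the estimate with $\lambda^2\gamma^2|F|^2$ on the right from a Carleman/controllability estimate with an $H^1$-gain for the adjoint backward equation, in the spirit of \cite{imanuvilov2003carleman} and \cite{liu2014global}) or by reorganizing the identity so that the $H^{-1}$ source is only ever tested against zeroth- and first-order expressions in $v$. Your proposal supplies neither, so as written the key step fails.
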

In what follows, we fix $\mu=\mu_0$ given in Theorem \ref{thm1.1}. The following estimate provides a generalization of the previous Carleman estimate \eqref{3.202002}.
\begin{lm}\label{lm1.1}
Let $\mathcal{O}\subset G$ be a nonempty open subset and $d\in\mathbb{R}$. Then, one can find a positive constant $C_2=C_2(G,\mathcal{O},\mu_0,\beta^0,\beta_{ij}^0,d)$ such that for all  $F_0,F_1\in L^2_\mathcal{F}(0,T;L^2(G))$, $F\in L^2_\mathcal{F}(0,T;L^2(G;\mathbb{R}^N))$ and $z_0\in L^2_{\mathcal{F}_0}(\Omega;L^2(G))$, the associated weak solution $z$ of equation \eqref{3.010} satisfies that
\begin{align}\label{3.202002gene}
\begin{aligned}
&\,\lambda^d\mathbb{E}\int_Q \theta^2\gamma^dz^2\,dx\,dt + \lambda^{d-2}\mathbb{E}\int_Q \theta^2\gamma^{d-2}\vert\nabla z\vert^2\,dx\,dt\\
&\leq C_2\Bigg[\lambda^d\mathbb{E}\int_{0}^T\int_\mathcal{O} \theta^2\gamma^dz^2\,dx\,dt+\lambda^{d-3}\mathbb{E}\int_Q \theta^2\gamma^{d-3}F_0^2\,dx\,dt\\
&\quad\quad\,\;+\lambda^{d-1}\mathbb{E}\int_Q \theta^2\gamma^{d-1}F_1^2\,dx\,dt+\lambda^{d-1}\mathbb{E}\int_Q \theta^2\gamma^{d-1}\vert F\vert^2\,dx\,dt\Bigg],
\end{aligned}
\end{align}
for all $\lambda\geq C_2(T+T^2)$.
\end{lm}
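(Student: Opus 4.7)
The natural strategy is to deduce Lemma \ref{lm1.1} from Theorem \ref{thm1.1} by a weighted change of unknown, absorbing the $d-3$ shift of the $\lambda$-powers into a rescaling of $z$. Specifically, I would set
\[
\widehat{z} := (\lambda\gamma)^{(d-3)/2}\, z,
\]
so that the weight $\lambda^3\gamma^3\widehat{z}^{\,2}$ appearing in \eqref{3.202002} is exactly $\lambda^d\gamma^d z^2$, and similarly $\lambda\gamma|\nabla\widehat{z}|^2 = \lambda^{d-2}\gamma^{d-2}|\nabla z|^2$ because $\nabla\gamma=0$. This choice also converts the localized observation term $\lambda^3\gamma^3\widehat{z}^{\,2}$ over $\mathcal O$ into $\lambda^d\gamma^d z^2$, exactly as required by \eqref{3.202002gene}.

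Next I would derive the SPDE satisfied by $\widehat z$. Since $(\lambda\gamma)^{(d-3)/2}$ is a smooth, deterministic, $x$-independent multiplier, Itô's product rule gives
\[
d\widehat z - L_0(t)\widehat z\,dt = \bigl(\widehat F_0 + \nabla\cdot\widehat F\bigr)\,dt + \widehat F_1\, dW(t),
\]
with $\widehat F := (\lambda\gamma)^{(d-3)/2} F$, $\widehat F_1 := (\lambda\gamma)^{(d-3)/2} F_1$, and
\[
\widehat F_0 := (\lambda\gamma)^{(d-3)/2} F_0 + \tfrac{d-3}{2}\,\lambda\,(\lambda\gamma)^{(d-5)/2}\gamma_t\, z.
\]
Applying Theorem \ref{thm1.1} to $\widehat z$ on the left transcribes verbatim into the left-hand side of \eqref{3.202002gene}, and the $F$ and $F_1$ source terms pick up the desired factors $\lambda^{d-1}\gamma^{d-1}$. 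The only non-cosmetic term is the extra $z$-contribution inside $\widehat F_0$: using $(a+b)^2\leq 2a^2+2b^2$ and the bound $|\gamma_t|\leq C_0 T\gamma^2$ from \eqref{2.301}, this contribution is controlled by $C\lambda^{d-3}T^2\,\mathbb E\!\int_Q\theta^2\gamma^{d-1}z^2\,dx\,dt$.

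Now I would absorb this $z^2$-term into the leading $\lambda^d\mathbb{E}\!\int_Q\theta^2\gamma^d z^2$ on the left. Since $\gamma\geq 4/T^2$, one has $C\lambda^{d-3}T^2\gamma^{d-1}\leq \tfrac{1}{2}\lambda^d\gamma^d$ as soon as $\lambda^3\gtrsim T^4$, which is implied by $\lambda\geq C_2(T+T^2)$ for a suitably large $C_2$ (checking separately the regimes $T\leq 1$ and $T\geq 1$ confirms $T+T^2\gtrsim T^{4/3}$). After this absorption, rearranging the inequality yields precisely \eqref{3.202002gene}.

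The one point that requires care rather than computation is that, for $d\neq 3$, the multiplier $(\lambda\gamma)^{(d-3)/2}$ is singular at $t=0$ and $t=T$, so $\widehat z$ need not have an $L^2$ initial datum and Theorem \ref{thm1.1} does not apply directly. I expect this to be the only real obstacle. The standard remedy is a time truncation: fix $\eta_\varepsilon\in C^\infty_c((0,T))$ with $\eta_\varepsilon\equiv 1$ on $[\varepsilon,T-\varepsilon]$, apply Theorem \ref{thm1.1} to $\eta_\varepsilon\widehat z$ (which starts from $0$ and generates only one additional controllable term of the form $\lambda^{(d-3)/2}\gamma^{(d-3)/2}\eta_\varepsilon'\,z$), and let $\varepsilon\to 0$. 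Because every integrand carries the factor $\theta^2=e^{2\lambda\alpha}$, which decays super-polynomially as $t\to 0^+$ or $t\to T^-$, dominated convergence delivers the passage to the limit and gives the claimed estimate for the original $z$.
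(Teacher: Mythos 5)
Your proposal is correct and follows essentially the same route as the paper: set $h=(\lambda\gamma)^{(d-3)/2}z$, apply Theorem \ref{thm1.1} to the equation for $h$ (whose only new drift term is $\tfrac{d-3}{2}\gamma'\gamma^{-1}h$), and absorb that term using $|\gamma'\gamma^{-1}|\leq C_0T\gamma$, $\gamma\geq 4/T^2$ and $\lambda\gtrsim T+T^2$. Your closing remark about the multiplier being singular at $t=0,T$ and the $\varepsilon$-truncation is a legitimate refinement that the paper passes over silently, since it applies Theorem \ref{thm1.1} to $h$ without discussing its initial datum.
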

\begin{proof}
Notice that if $d=3$, the inequality \eqref{3.202002gene} is shown in Theorem \ref{thm1.1}. For $d\neq3$, let us consider the function $h=(\lambda\gamma)^{\frac{d-3}{2}}z$. It is easy to see that
\begin{equation}\label{eqqwih}
		\begin{cases}
			\begin{array}{ll}
			dh- L_0(t) h \,dt = \left[(\lambda\gamma)^{\frac{d-3}{2}}F_0+\frac{d-3}{2}\gamma'\gamma^{-1}h+(\lambda\gamma)^{\frac{d-3}{2}}\nabla\cdot F\right]\,dt + (\lambda\gamma)^{\frac{d-3}{2}}F_1 \,dW(t)	 &\textnormal{in}\,\,Q,\\
				h=0 &\textnormal{on}\,\,\Sigma.
			\end{array}
		\end{cases}
	\end{equation}
We now apply Carleman estimate \eqref{3.202002} for the equation \eqref{eqqwih}, we obtain that
\begin{align}\label{inn1}
\begin{aligned}
&\,\lambda^3\mathbb{E}\int_Q \theta^2\gamma^3h^2\,dx\,dt + \lambda\mathbb{E}\int_Q \theta^2\gamma\vert\nabla h\vert^2\,dx\,dt\\
&\leq C_1\Bigg[\lambda^3\mathbb{E}\int_{0}^T\int_\mathcal{O} \theta^2\gamma^3h^2\,dx\,dt+\mathbb{E}\int_Q \theta^2\left|(\lambda\gamma)^{\frac{d-3}{2}}F_0+\frac{d-3}{2}\gamma'\gamma^{-1}h\right|^2\,dx\,dt\\
&\quad\quad\,\;+\lambda^2\mathbb{E}\int_Q \theta^2\gamma^2\left|(\lambda\gamma)^{\frac{d-3}{2}}F_1\right|^2\,dx\,dt+\lambda^2\mathbb{E}\int_Q \theta^2\gamma^2\left|(\lambda\gamma)^{\frac{d-3}{2}} F\right|^2\,dx\,dt\Bigg],
\end{aligned}
\end{align}
for any $\lambda\geq C_1(T+T^2)$. Using that $|\gamma'\gamma^{-1}|\leq C_0T\gamma$ for the second term on the right hand side of \eqref{inn1}, we get that
\begin{align*}
\begin{aligned}
&\,\mathbb{E}\int_Q \theta^2\left|(\lambda\gamma)^{\frac{d-3}{2}}F_0+\frac{d-3}{2}\gamma'\gamma^{-1}h\right|^2\,dx\,dt\\
&\leq 2\lambda^{d-3}\mathbb{E}\int_Q \theta^2\gamma^{d-3}F_0^2\,dx\,dt+C_2T^2\mathbb{E}\int_Q \theta^2\gamma^2 h^2\,dx\,dt.
\end{aligned}
\end{align*}
Then, it follows that
\begin{align}\label{inn2}
\begin{aligned}
&\,\mathbb{E}\int_Q \theta^2\left|(\lambda\gamma)^{\frac{d-3}{2}}F_0+\frac{d-3}{2}\gamma'\gamma^{-1}h\right|^2\,dx\,dt\\
&\leq 2\lambda^{d-3}\mathbb{E}\int_Q \theta^2\gamma^{d-3}F_0^2\,dx\,dt+C_2\lambda T^4\mathbb{E}\int_Q \theta^2\gamma^3 h^2\,dx\,dt.
\end{aligned}
\end{align}
Taking a large $\lambda\geq C_2T^2$ in \eqref{inn2}, we get that
\begin{align}\label{inn22}
\begin{aligned}
&\,\mathbb{E}\int_Q \theta^2\left|(\lambda\gamma)^{\frac{d-3}{2}}F_0+\frac{d-3}{2}\gamma'\gamma^{-1}h\right|^2\,dx\,dt\\
&\leq 2\lambda^{d-3}\mathbb{E}\int_Q \theta^2\gamma^{d-3}F_0^2\,dx\,dt+\frac{\lambda^3}{2C_1}\mathbb{E}\int_Q \theta^2\gamma^3 h^2\,dx\,dt.
\end{aligned}
\end{align}
Finally, by combining \eqref{inn1} and \eqref{inn22}, and taking a large enough $\lambda\geq C_2(T+T^2)$, we deduce the desired Carleman inequality \eqref{3.202002gene}.
\end{proof}
\subsection{Proof of Theorem \ref{thmm1.2}}
In this subsection, we adopt the following notation
$$\mathcal{L}_\mathcal{B}(d,z)=\lambda^d\mathbb{E}\int_0^T\int_\mathcal{B} \theta^2\gamma^d|z|^2 \,dx\,dt,$$
where $\mathcal{B}\subset G$ is any nonempty open subset. To prove Theorem \ref{thmm1.2}, we will first establish the following important result.
\begin{lm}\label{lmm2.3}
Under the assumptions of Theorem \ref{thmm1.2} and given $l\in\mathbb{N}$, $\varepsilon>0$, $k\in\{2,...,n\}$ and two any open sets $\mathcal{O}_0$ and $\mathcal{O}_1$ such that
$G_1\subset\mathcal{O}_1\Subset\mathcal{O}_0\subset \widetilde{G}_0$, there exists a positive constant $C_0$ (depending on $G,\mathcal{O}_0,\mathcal{O}_1, \mu_0,  a_0, \beta_0, \beta_{ij}$, and $M_0$) and $l_{kj}\in \mathbb{N}$, $1\leq j\leq k-1$ (depending only on $l,n,k$ and $j$), such that for all $z^0\in L^2_{\mathcal{F}_0}(\Omega;L^2(G;\mathbb{R}^n))$, the associated solution $z=(z_1,...,z_n)$ of system \eqref{1.1Adjoint} satisfies that 
\begin{align}\label{desiinee2.10}
\begin{aligned}
\mathcal{L}_{\mathcal{O}_1}(l,z_k)\leq&\, \varepsilon\left[\mathcal{I}(3(n+1-k),z_k)+\mathcal{I}(3(n-k),z_{k+1})\right]\\
&+C_0\left(1+\frac{1}{\varepsilon}\right)\sum_{j=1}^{k-1} \mathcal{L}_{\mathcal{O}_0}(l_{kj},z_j),
\end{aligned}
\end{align}
for all $$\lambda\geq\lambda_0=C_0\left(T+T^2+T^2\max_{i\leq j}\left(|a_{ij}|^{\frac{2}{3(j-i)+3}}_\infty+|C_{ij}|^{\frac{2}{3(j-i)+1}}_\infty+|b_{ij}|^{\frac{2}{3(j-i)+1}}_\infty\right)\right).$$
In \eqref{desiinee2.10}, we choose that $z_{n+1}=0$.
\end{lm}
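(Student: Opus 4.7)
The plan is to isolate the coupling $a_{k,k-1}z_k$ appearing in the $(k-1)$-th equation of \eqref{1.1Adjoint} via an It\^o duality, and dispatch each resulting piece into either $\varepsilon\mathcal{I}(\cdot,z_k)$, $\varepsilon\mathcal{I}(\cdot,z_{k+1})$, or local $\mathcal{L}_{\mathcal{O}_0}(\cdot,z_j)$ with $j\leq k-1$. I would first fix $\eta\in C_c^\infty(G)$ with $\eta\equiv 1$ on $\mathcal{O}_1$ and $\operatorname{supp}(\eta)\subset\mathcal{O}_0$. Treating the case $a_{k,k-1}\geq a_0$ (the opposite sign in \eqref{asspmt} is symmetric),
$$\mathcal{L}_{\mathcal{O}_1}(l,z_k)\leq a_0^{-1}\lambda^l\,\mathbb{E}\int_Q \eta^2\theta^2\gamma^l\,z_k\,(a_{k,k-1}z_k)\,dx\,dt.$$
To substitute for $a_{k,k-1}z_k$, apply It\^o's formula to $\eta^2\theta^2\gamma^l z_{k-1}z_k$, integrate over $Q$, take expectation, and use $\theta(0,\cdot)=\theta(T,\cdot)=0$ to cancel the time-boundary contributions; the stochastic integrals vanish in expectation. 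Substituting the drifts and diffusions from \eqref{1.1Adjoint} yields
$$\mathbb{E}\int_Q \eta^2\theta^2\gamma^l a_{k,k-1}z_k^2 = \mathrm{(I)}+\mathrm{(II)}+\mathrm{(III)}+\mathrm{(IV)}+\mathrm{(V)},$$
where $\mathrm{(I)} = \mathbb{E}\int_Q \eta^2(\theta^2\gamma^l)_t z_{k-1}z_k$; $\mathrm{(II)} = \mathbb{E}\int_Q \eta^2\theta^2\gamma^l(z_k L_{k-1}z_{k-1}+z_{k-1}L_k z_k)$; $\mathrm{(III)}$ collects the lower-order coupling sums in $z_j$ with $j\leq k-1$; $\mathrm{(IV)} = -\mathbb{E}\int_Q \eta^2\theta^2\gamma^l z_{k-1}\,a_{k+1,k}\,z_{k+1}$ (zero when $k=n$); and $\mathrm{(V)}$ is the quadratic-variation bracket between the diffusions of $z_{k-1}$ and $z_k$.

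For $\mathrm{(II)}$, two successive space integrations by parts (boundary-free because $\operatorname{supp}(\eta)\Subset G$ and $z_i|_\Gamma=0$) rewrite the pairing as $-\mathbb{E}\int_Q \eta^2\theta^2\gamma^l(\beta^{k-1}+\beta^k)\nabla z_{k-1}\cdot\nabla z_k$ plus products of the form $(\nabla z_i)\cdot z_j\cdot\nabla(\eta^2\theta^2\gamma^l)$; a further integration by parts transfers the remaining $\nabla z_{k-1}$ factor onto $z_k$ so that these auxiliary pieces only contain $\nabla z_k$ and $z_{k-1}$ (the latter local by $\eta$). Using $|(\theta^2\gamma^l)_t|\leq CT\lambda\theta^2\gamma^{l+2}$ and $|\nabla^j(\eta^2\theta^2\gamma^l)|\leq C\lambda^j\theta^2\gamma^l$ for $j=1,2$ (coming from \eqref{2.301} and $\mu=\mu_0$), every such product---together with every piece of $\mathrm{(I)}$, $\mathrm{(III)}$, $\mathrm{(IV)}$, $\mathrm{(V)}$---is split by Young's inequality so that the $z_k$-factor becomes a small multiple of either $\lambda^{3(n+1-k)}\theta^2\gamma^{3(n+1-k)}z_k^2$ or $\lambda^{3(n+1-k)-2}\theta^2\gamma^{3(n+1-k)-2}|\nabla z_k|^2$ (absorbable into $\varepsilon\mathcal{I}(3(n+1-k),z_k)$), the $z_{k+1}$-factor in $\mathrm{(IV)}$ becomes $\lambda^{3(n-k)}\theta^2\gamma^{3(n-k)}z_{k+1}^2$ (absorbed into $\varepsilon\mathcal{I}(3(n-k),z_{k+1})$), and each $z_i$ with $i\leq k-1$ lands in a local $C_\varepsilon\mathcal{L}_{\mathcal{O}_0}(l_{ki},z_i)$ with an exponent $l_{ki}=l_{ki}(l,n,k,i)$ read off from the split. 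The stated $\lambda_0$ threshold emerges when the potentials $|a_{ij}|_\infty,|C_{ij}|_\infty,|b_{ij}|_\infty$ are absorbed into $(\lambda\gamma)$-weights via $|a|\leq(\lambda\gamma)^s$ with $s$ tuned to the exponents $3(j-i)+3$ and $3(j-i)+1$ that appear in the statement.

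The main obstacle is the cross-gradient term $\int\eta^2\theta^2\gamma^l\,\nabla z_{k-1}\cdot\nabla z_k$: a Young split yields a local integral of $|\nabla z_{k-1}|^2$, which is not permitted on the right-hand side of \eqref{desiinee2.10}. I would resolve it by a Caccioppoli-type estimate on $z_{k-1}$: pick an enlarged cutoff $\tilde\eta\in C_c^\infty(G)$ with $\tilde\eta\equiv 1$ on $\operatorname{supp}(\eta)$ and $\operatorname{supp}(\tilde\eta)\subset\mathcal{O}_0$, apply It\^o to $\tilde\eta^2\theta^2\gamma^{l''}z_{k-1}^2$ using the $(k-1)$-th equation of \eqref{1.1Adjoint}; the coercivity \eqref{assmponalpha} isolates $\beta_0\,\mathbb{E}\int_Q \tilde\eta^2\theta^2\gamma^{l''}|\nabla z_{k-1}|^2$, while the remainder splits (again by Young) into local weighted $z_{k-1}^2$, local $z_j^2$ for $j\leq k-2$, a local $z_k^2$ coming from the $-a_{k,k-1}z_k$ coupling in $\operatorname{drift}(z_{k-1})$, and quadratic diffusion contributions. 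The re-entering local $z_k^2$ is controlled by $\varepsilon\mathcal{I}(3(n+1-k),z_k)$ using $\gamma\geq 4/T^2$ and $\lambda\geq\lambda_0$ (the $T$-dependent loss is absorbed into the constant $C_0$). Plugging the Caccioppoli bound back into the Young estimate of the cross-gradient term and combining with the previously obtained bounds for $\mathrm{(I)}$--$\mathrm{(V)}$ yields \eqref{desiinee2.10}, with $l_{kj}$ tracked explicitly through the splits.
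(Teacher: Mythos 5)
Your proposal follows essentially the same route as the paper's proof: bound $\mathcal{L}_{\mathcal{O}_1}(l,z_k)$ through the coupling $a_{k,k-1}$, apply It\^o's formula to the weighted product $z_{k-1}z_k$ localized by a cutoff supported in $\mathcal{O}_0$, split every resulting term by Young's inequality into $\varepsilon\mathcal{I}(3(n+1-k),z_k)$, $\varepsilon\mathcal{I}(3(n-k),z_{k+1})$ and local integrals of $z_j$ ($j\le k-1$), and remove the leftover local $|\nabla z_{k-1}|^2$ by a localized Caccioppoli/energy identity whose re-entering coupling term in $z_k$ is absorbed with a small parameter. One correction: since in the iteration $l$ generally exceeds $3(n+1-k)$ (e.g.\ $l_{n,n-1}=7>6$ at the step $k=n-1$), the re-entering local weighted $z_k^2$ from the Caccioppoli step cannot be dominated by $\varepsilon\mathcal{I}(3(n+1-k),z_k)$ as you claim via $\gamma\ge 4/T^2$; it must instead be absorbed directly into the left-hand side quantity $\mathbb{E}\int_Q v\xi a_{k,k-1}|z_k|^2\,dx\,dt$, of which it is exactly a small multiple --- which is what the paper does.
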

\begin{proof}
Let us consider $\xi\in C^{\infty}(\mathbb{R}^N)$ such that
\begin{subequations}\label{assmzeta}
\begin{align}
&0\leq\xi\leq 1,\,\quad \xi=1 \,\, \textnormal{in}\,\, \mathcal{O}_1,\quad \,\textnormal{supp}(\xi)\subset \mathcal{O}_0,\label{assmzeta1}\\ &\quad
\frac{\Delta\xi}{\xi^{1/2}}\in L^\infty(G),\quad
\frac{\nabla\xi}{\xi^{1/2}}\in L^\infty(G;\mathbb{R}^N)\label{assmzeta2}.
\end{align}
\end{subequations}
Such a function $\xi$ exists. Indeed, by standard arguments, one can take $\xi_0 \in C^\infty_0(\mathbb{R}^N)$ satisfying \eqref{assmzeta1} and then choose $\xi = \xi_0^4$, which easily verifies \eqref{assmzeta}.

Let $k\in\{2,...,n\}$, from the assumption \eqref{asspmt} on the coefficients $a_{k,k-1}$, we assume without loss of generality that $a_{k,k-1}\geq a_0>0$ in $(0,T)\times\widetilde{G}_0$, $\mathbb{P}$\textnormal{-a.s}. In the rest of the proof, we fix $\delta_i>0$ ($i=0,1$), $m=2l-3(n-k)-1$ and $v=(\lambda\gamma)^l\theta^2$. It is easy to see that for a large $\lambda\geq C_0(T+T^2)$
\begin{align}\label{esttmforT}
|\nabla(v\xi)|\leq C_0\lambda^{l+1}\theta^2\gamma^{l+1}\xi^{1/2},\qquad |\partial_t v|\leq C_0\lambda^{l+2}\theta^2\gamma^{l+2}.
\end{align}
We first note that
\begin{align}\label{estfirl0}
a_0\mathcal{L}_{\mathcal{O}_1}(l,z_k)\leq\mathbb{E}\int_Q v\xi a_{k,k-1} |z_k|^2 \,dx\,dt.
\end{align}
By Itô's formula, we compute $d\langle z_{k-1},v\xi z_k\rangle_{L^2(G)}$, integrating the obtained equality on $(0,T)$ and taking the expectation on both sides, we obtain that
\begin{align}\label{desiinee2.102.11}
\begin{aligned}
\mathbb{E}\int_Q v\xi a_{k,k-1} |z_k|^2 \,dx\,dt=&\,\mathbb{E}\int_Q \partial_t v\,\xi z_{k-1}z_k \,dx\,dt+\mathbb{E}\int_0^T \langle L_k z_k,v\xi z_{k-1}\rangle \,dt\\
&+\sum_{j=1}^{k} \mathbb{E}\int_0^T \langle\nabla\cdot(C_{jk}z_j),v\xi z_{k-1}\rangle \,dt-\sum_{j=1}^{k} \mathbb{E}\int_Q v\xi a_{jk}z_j z_{k-1} \,dx\,dt\\
&-\mathbb{E}\int_Q v\xi a_{k+1,k}z_{k-1}z_{k+1}\,dx\,dt+\mathbb{E}\int_0^T \langle L_{k-1}z_{k-1},v\xi z_k\rangle \,dt\\
&-\sum_{j=1}^{k-1} \mathbb{E}\int_Q v\xi z_k a_{j,k-1}z_j \,dx\,dt+\sum_{j=1}^{k-1} \mathbb{E}\int_0^T \langle \nabla\cdot(C_{j,k-1}z_j),v\xi z_k\rangle \,dt\\
&+\sum_{j=1}^{k-1}\sum_{j'=1}^{k}\mathbb{E}\int_Q v\xi b_{j,k-1}b_{j'k}z_{j}z_{j'}\,dx\,dt\\
&=\sum_{i=1}^9 J_i,
\end{aligned}
\end{align}
where $\langle\cdot,\cdot\rangle$ denotes the duality product between $H^{-1}(G)$ and $H^1_0(G)$. Let us now estimate all the terms $J_i$ ($i=1,...,9$).

Using \eqref{esttmforT}, we have that
\begin{align*}
    \begin{aligned}
J_1&\leq \mathbb{E}\int_Q |\partial_t v| \xi |z_{k-1}||z_k| \,dx\,dt\\
&\leq C_0\lambda^{l+2}\mathbb{E}\int_Q  \theta^2\gamma^{l+2}\xi |z_{k-1}||z_k| \,dx\,dt\\
&\leq \frac{\delta_0}{7}\lambda^l\mathbb{E}\int_Q \theta^2\gamma^l\xi a_{k,k-1}|z_k|^2 \,dx\,dt+\frac{C_0}{\delta_0} \lambda^{l+4}\mathbb{E}\int_Q  \theta^2\gamma^{l+4}\xi|z_{k-1}|^2 \,dx\,dt.
    \end{aligned}
\end{align*}
Then, we get that
\begin{align}\label{ineeJ1}
    \begin{aligned}
J_1\leq \frac{\delta_0}{7}\mathbb{E}\int_Q v\xi a_{k,k-1}|z_k|^2 \,dx\,dt+\frac{C_0}{\delta_0} \lambda^{l+4}\mathbb{E}\int_Q  \theta^2\gamma^{l+4}\xi|z_{k-1}|^2 \,dx\,dt.
    \end{aligned}
\end{align}
By integration by parts, we obtain
\begin{align*}
    \begin{aligned}
J_2&=-\sum_{i,j=1}^N \mathbb{E}\int_Q \left(\xi z_{k-1}\beta_{ij}^k \frac{\partial z_k}{\partial x_j} \frac{\partial v}{\partial x_i}+ v z_{k-1}\beta_{ij}^k \frac{\partial z_k}{\partial x_j}\frac{\partial \xi}{\partial x_i}+v\xi\beta_{ij}^k \frac{\partial z_k}{\partial x_j}\frac{\partial z_{k-1}}{\partial x_i}\right) \,dx\,dt.
    \end{aligned}
\end{align*}
From \eqref{assmzeta}, it is easy to see that
\begin{align*}
    \begin{aligned}
J_2&\leq C_0\lambda^{l+1}\mathbb{E}\int_Q \theta^2 \gamma^{l+1}\xi|z_{k-1}||\nabla z_k| \,dx\,dt+C_0\lambda^l \mathbb{E}\int_Q\theta^2\gamma^l\xi^{1/2}|z_{k-1}||\nabla z_k| \,dx\,dt\\
&\quad+C_0\lambda^l \mathbb{E}\int_Q \theta^2\gamma^l\xi|\nabla z_k||\nabla z_{k-1}| \,dx\,dt.
    \end{aligned}
\end{align*}
By Young's inequality, it follows that for a large $\lambda\geq C_0T^2$
\begin{align}\label{ineeJ2}
    \begin{aligned}
J_2&\leq \frac{\varepsilon a_0}{6}\lambda^{3(n-k)+1}\mathbb{E}\int_Q \theta^2\gamma^{3(n-k)+1}|\nabla z_k|^2 \,dx\,dt+\frac{C_0}{\varepsilon}\Bigg(\lambda^{m+2}\mathbb{E}\int_Q \theta^2\gamma^{m+2}\xi|z_{k-1}|^2 \,dx\,dt\\
&\quad+\lambda^m\mathbb{E}\int_Q \theta^2\gamma^m\xi|\nabla z_{k-1}|^2 \,dx\,dt\Bigg).
    \end{aligned}
\end{align}
On the other hand, we have that
\begin{align}\label{j33ine}
    \begin{aligned}
        J_3&=\mathbb{E}\int_0^T \langle\nabla\cdot(C_{kk}z_k),v\xi z_{k-1}\rangle \,dt+\sum_{j=1}^{k-1} \mathbb{E}\int_0^T \langle\nabla\cdot(C_{jk}z_j),v\xi z_{k-1}\rangle \,dt\\
        &=-\mathbb{E}\int_Q \left[\nabla(v\xi)\cdot C_{kk}z_kz_{k-1} + v\xi C_{kk}\cdot\nabla z_{k-1} z_k\right] \,dx\,dt\\
        &\quad-\sum_{j=1}^{k-1} \mathbb{E}\int_Q \left[\nabla(v\xi)\cdot C_{jk}z_jz_{k-1}+v\xi C_{jk}\cdot\nabla z_{k-1} z_j\right]\,dx\,dt\\
        &=K_1+K_2.
    \end{aligned}
\end{align}
By using \eqref{esttmforT}, notice that
\begin{align*}
    \begin{aligned}
    K_1&\leq\mathbb{E}\int_Q |\nabla(v\xi)|  |C_{kk}| |z_k| |z_{k-1}| \,dx\,dt+\mathbb{E}\int_Q 
    |v\xi| |C_{kk}| |\nabla z_{k-1}| |z_k| \,dx\,dt\\
    &\leq C_0\lambda^{l+1}|C_{kk}|_\infty \mathbb{E}\int_Q \theta^2 \gamma^{l+1}\xi^{1/2}|z_k||z_{k-1}| \,dx\,dt+C_0\lambda^{l}|C_{kk}|_\infty \mathbb{E}\int_Q \theta^2 \gamma^{l}\xi |z_k||\nabla z_{k-1}| \,dx\,dt.
    \end{aligned}
\end{align*}
Then by Young's inequality, we have
\begin{align*}
    \begin{aligned}
K_1&\leq\frac{\delta_0}{7}\lambda^l\mathbb{E}\int_Q \theta^2\gamma^l\xi a_{k,k-1}|z_k|^2\,dx\,dt+\frac{C_0}{\delta_0}\Bigg(\lambda^{l+2}|C_{kk}|_\infty^2\mathbb{E}\int_0^T\int_{\mathcal{O}_0}\theta^2\gamma^{l+2}|z_{k-1}|^2\,dx\,dt\\
&\quad+\lambda^{l}|C_{kk}|_\infty^2\mathbb{E}\int_Q \theta^2\gamma^{l}\xi|\nabla z_{k-1}|^2 \,dx\,dt\Bigg).
    \end{aligned}
\end{align*}
Choosing a large $\lambda\geq C_0|C_{kk}|_\infty^2T^2$, we get that
\begin{align}\label{k122}
    \begin{aligned}
    K_1&\leq\frac{\delta_0}{7}\mathbb{E}\int_Q v\xi a_{k,k-1}|z_k|^2\,dx\,dt+\frac{C_0}{\delta_0}\Bigg(\lambda^{l+3}\mathbb{E}\int_0^T\int_{\mathcal{O}_0}\theta^2\gamma^{l+3}|z_{k-1}|^2\,dx\,dt\\
    &\quad+\lambda^{l+1}\mathbb{E}\int_Q \theta^2\gamma^{l+1}\xi|\nabla z_{k-1}|^2 \,dx\,dt\Bigg).
    \end{aligned}
\end{align}
We also have
\begin{align}\label{2.200ine}
    \begin{aligned}
    K_2&\leq\sum_{j=1}^{k-1}\mathbb{E}\int_Q\left[|\nabla(v\xi)||C_{jk}||z_j||z_{k-1}|+|v\xi||C_{jk}||\nabla z_{k-1}||z_j|\right]\,dx\,dt.
    \end{aligned}
\end{align}
Using \eqref{esttmforT} and applying Young's inequality in the right hand side of \eqref{2.200ine}, one has that
\begin{align*}
    \begin{aligned}
    K_2&\leq\sum_{j=1}^{k-1}\mathbb{E}\int_Q\left[C_0\lambda^{l+1}|C_{jk}|_\infty\theta^2\gamma^{l+1}\xi^{1/2}|z_j||z_{k-1}|+\lambda^{l}|C_{jk}|_\infty\theta^2\gamma^{l}\xi|z_j||\nabla z_{k-1}|\right]\,dx\,dt.
    \end{aligned}
\end{align*}
Then, it follows that
\begin{align}\label{k33}
    \begin{aligned}
    K_2&\leq C_0\Bigg(\lambda^m\mathbb{E}\int_Q \theta^2\gamma^m\xi|\nabla z_{k-1}|^2\,dx\,dt+\lambda^{m+2}\mathbb{E}\int_0^T\int_{\mathcal{O}_0} \theta^2\gamma^{m+2}|z_{k-1}|^2\,dx\,dt\\
    &\hspace{1cm}\,+\sum_{j=1}^{k-1}\lambda^{3(n-k)+1}|C_{jk}|^2_\infty\mathbb{E}\int_Q\theta^2\gamma^{3(n-k)+1}\xi|z_j|^2\,dx\,dt\Bigg).
    \end{aligned}
\end{align}
Taking a large $\lambda\geq C_0T^2|C_{jk}|_\infty^{2/(3(k-j)+1)}$ in \eqref{k33}, we get 
\begin{align}\label{kfi33}
    \begin{aligned}
    K_2&\leq C_0\bigg(\lambda^m\mathbb{E}\int_Q \theta^2\gamma^m\xi|\nabla z_{k-1}|^2\,dx\,dt+\lambda^{m+2}\mathbb{E}\int_0^T\int_{\mathcal{O}_0} \theta^2\gamma^{m+2}|z_{k-1}|^2\,dx\,dt\\
    &\hspace{1.2cm}+\sum_{j=1}^{k-1}\lambda^{3(n-j)+2}\mathbb{E}\int_Q\theta^2\gamma^{3(n-j)+2}\xi|z_j|^2\,dx\,dt\bigg).
    \end{aligned}
\end{align}
Combining \eqref{j33ine}, \eqref{k122} and \eqref{kfi33}, we conclude that
\begin{align}\label{ineeJ3}
    \begin{aligned}
J_3&\leq\frac{\delta_0}{7}\mathbb{E}\int_Q v\xi a_{k,k-1}|z_k|^2\,dx\,dt+\frac{C_0}{\delta_0}\bigg(\lambda^{l+3}\mathbb{E}\int_0^T\int_{\mathcal{O}_0}\theta^2\gamma^{l+3}|z_{k-1}|^2\,dx\,dt\\
&\quad+\lambda^{l+1}\mathbb{E}\int_Q \theta^2\gamma^{l+1}\xi|\nabla z_{k-1}|^2 \,dx\,dt\bigg)\\
&\quad+C_0\bigg(\lambda^m\mathbb{E}\int_Q \theta^2\gamma^m\xi|\nabla z_{k-1}|^2\,dx\,dt+\lambda^{m+2}\mathbb{E}\int_0^T\int_{\mathcal{O}_0} \theta^2\gamma^{m+2}|z_{k-1}|^2\,dx\,dt\\
&\hspace{1.4cm}+\sum_{j=1}^{k-1}\lambda^{3(n-j)+2}\mathbb{E}\int_Q\theta^2\gamma^{3(n-j)+2}\xi|z_j|^2\,dx\,dt\bigg).
    \end{aligned}
\end{align}
Similarly to $J_3$, see that
\begin{align*}
J_4=-\mathbb{E}\int_Q v\xi a_{kk} z_k z_{k-1} \,dx\,dt - \sum_{j=1}^{k-1} \mathbb{E}\int_Q v\xi a_{jk} z_j z_{k-1} \,dx\,dt,
\end{align*}
which implies that
\begin{align*}
    \begin{aligned}
        J_4&\leq \frac{\delta_0}{7}\lambda^l\mathbb{E}\int_Q \theta^2\gamma^l\xi a_{k,k-1}|z_k|^2\,dx\,dt+\frac{C_0}{\delta_0}\lambda^l|a_{kk}|^2_\infty\mathbb{E}\int_Q \theta^2\gamma^l\xi|z_{k-1}|^2\,dx\,dt\\
&\quad+C_0\lambda^{m+2}\mathbb{E}\int_Q\theta^2\gamma^{m+2}\xi|z_{k-1}|^2\,dx\,dt+C_0\sum_{j=1}^{k-1}\lambda^{3(n-k)-1}|a_{jk}|^2_\infty\mathbb{E}\int_Q\theta^2\gamma^{3(n-k)-1}\xi|z_j|^2\,dx\,dt.
    \end{aligned}
\end{align*}
By choosing a large $\lambda\geq C_0T^2|a_{jk}|_\infty^{2/(3(k-j)+3)}$, it follows that
\begin{align}\label{ineeJ4}
    \begin{aligned}
        J_4&\leq \frac{\delta_0}{7}\mathbb{E}\int_Q v\xi a_{k,k-1}|z_k|^2\,dx\,dt+\frac{C_0}{\delta_0}\lambda^{l+3}\mathbb{E}\int_Q \theta^2\gamma^{l+3}\xi|z_{k-1}|^2\,dx\,dt\\
&\quad+C_0\lambda^{m+2}\mathbb{E}\int_Q\theta^2\gamma^{m+2}\xi|z_{k-1}|^2\,dx\,dt+C_0\sum_{j=1}^{k-1}\lambda^{3(n-j)+2}\mathbb{E}\int_Q\theta^2\gamma^{3(n-j)+2}\xi|z_j|^2\,dx\,dt.
    \end{aligned}
\end{align}
By Young's inequality, we derive that
\begin{align}\label{ineeJ5}
    \begin{aligned}
        J_5\leq \frac{\varepsilon a_0}{2}\lambda^{3(n-k)}\mathbb{E}\int_Q\theta^2\gamma^{3(n-k)}|z_{k+1}|^2\,dx\,dt+\frac{C_0}{\varepsilon}\lambda^{m+1}\mathbb{E}\int_Q\theta^2\gamma^{m+1}\xi|z_{k-1}|^2\,dx\,dt.
    \end{aligned}
\end{align}
On the other hand, by integration by parts we have
\begin{align*}
    \begin{aligned}
        J_6&=-\sum_{i,j=1}^N\mathbb{E}\int_Q \beta_{ij}^{k-1}\frac{\partial z_{k-1}}{\partial x_j} \frac{\partial(v\xi z_k)}{\partial x_i}\,dx\,dt\\
        &=-\sum_{i,j=1}^N\mathbb{E}\int_Q \left(\beta_{ij}^{k-1}z_k\frac{\partial z_{k-1}}{\partial x_j}\frac{\partial(v\xi)}{\partial x_i}+\beta_{ij}^{k-1}v\xi\frac{\partial z_{k}}{\partial x_i}\frac{\partial z_{k-1}}{\partial x_j}\right)\,dx\,dt.
    \end{aligned}
\end{align*}
From \eqref{esttmforT}, it follows that
\begin{align*}
    \begin{aligned}
        J_6&\leq C_0\lambda^{l+1}\mathbb{E}\int_Q \theta^2\gamma^{l+1}\xi^{1/2}|z_k||\nabla z_{k-1}|\,dx\,dt+C_0\lambda^l\mathbb{E}\int_Q\theta^2\gamma^l\xi|\nabla z_k||\nabla z_{k-1}|\,dx\,dt,
    \end{aligned}
\end{align*}
which provides that 
\begin{align}\label{ineeJ6}
    \begin{aligned}
J_6&\leq\frac{\delta_0}{7}\mathbb{E}\int_Q v\xi a_{k,k-1}|z_k|^2\,dx\,dt+\frac{C_0}{\delta_0}\lambda^{l+2}\mathbb{E}\int_0^T\int_{\mathcal{O}_0} \theta^2\gamma^{l+2}|\nabla z_{k-1}|^2\,dx\,dt
\\
&\quad+\frac{\varepsilon a_0}{6}\lambda^{3(n-k)+1}\mathbb{E}\int_Q\theta^2\gamma^{3(n-k)+1}|\nabla z_k|^2\,dx\,dt+\frac{C_0}{\varepsilon}\lambda^{m}\mathbb{E}\int_Q\theta^2\gamma^{m}\xi|\nabla z_{k-1}|^2\,dx\,dt.
    \end{aligned}
\end{align}
Using Young's inequality, we have 
\begin{align}\label{j7inee}
    \begin{aligned}
J_7\leq\frac{\delta_0}{7}\lambda^l\mathbb{E}\int_Q\theta^2\gamma^{l}\xi a_{k,k-1}|z_k|^2\,dx\,dt+\frac{C_0}{\delta_0}\sum_{j=1}^{k-1}|a_{j,k-1}|^2_\infty\lambda^l\mathbb{E}\int_Q\theta^2\gamma^{l}\xi|z_j|^2\,dx\,dt.
    \end{aligned}
\end{align}
Taking a large $\lambda\geq C_0T^2|a_{j,k-1}|_\infty^{2/(3(k-j))}$ in \eqref{j7inee}, we find that
\begin{align}\label{ineeJ7}
    \begin{aligned}
J_7\leq\frac{\delta_0}{7}\mathbb{E}\int_Q v\xi a_{k,k-1}|z_k|^2\,dx\,dt+\frac{C_0}{\delta_0}\sum_{j=1}^{k-1}\lambda^{l+3(k-j)}\mathbb{E}\int_Q\theta^2\gamma^{l+3(k-j)}\xi|z_j|^2\,dx\,dt.
    \end{aligned}
\end{align}
By integration by parts, we obtain that
\begin{align*}
    \begin{aligned}
        J_8=-\sum_{j=1}^{k-1}\mathbb{E}\int_Q \nabla(v\xi)\cdot C_{j,k-1}z_j z_k \,dx\,dt-\sum_{j=1}^{k-1}\mathbb{E}\int_Q v\xi\nabla z_k\cdot C_{j,k-1}z_j \,dx\,dt.
    \end{aligned}
\end{align*}
From \eqref{esttmforT}, it follows that
\begin{align*}
    \begin{aligned}
J_8\leq&\, C_0\sum_{j=1}^{k-1}|C_{j,k-1}|_\infty\lambda^{l+1}\mathbb{E}\int_Q \theta^2\gamma^{l+1}\xi^{1/2} |z_j| |z_k| \,dx\,dt\\
&+\sum_{j=1}^{k-1}|C_{j,k-1}|_\infty\lambda^l\mathbb{E}\int_Q \theta^2\gamma^l\xi|\nabla z_k| |z_j| \,dx\,dt.
    \end{aligned}
\end{align*}
By Young's inequality, we have 
\begin{align}\label{inej8for2029}
    \begin{aligned}
        J_8&\leq \frac{\delta_0}{7}\lambda^l\mathbb{E}\int_Q \theta^2\gamma^l\xi a_{k,k-1}|z_k|^2\,dx\,dt+\frac{C_0}{\delta_0}\sum_{j=1}^{k-1}|C_{j,k-1}|_\infty^2\lambda^{l+2}\mathbb{E}\int_0^T\int_{\mathcal{O}_0} \theta^2\gamma^{l+2}|z_j|^2\,dx\,dt\\
&\quad+\frac{\varepsilon a_0}{6}\lambda^{3(n-k)+1}\mathbb{E}\int_Q \theta^2\gamma^{3(n-k)+1}|\nabla z_k|^2\,dx\,dt\\
&\quad+\frac{C_0}{\varepsilon}\sum_{j=1}^{k-1}\lambda^{m}|C_{j,k-1}|_\infty^2\mathbb{E}\int_Q\theta^2\gamma^{m}\xi|z_j|^2 \,dx\,dt.
    \end{aligned}
\end{align}
Taking a large $\lambda\geq C_0T^2|C_{j,k-1}|_\infty^{2/(3(k-j)-2)}$ in \eqref{inej8for2029}, we get that
\begin{align}\label{ineeJ8}
    \begin{aligned}
        J_8&\leq \frac{\delta_0}{7}\mathbb{E}\int_Q v\xi a_{k,k-1}|z_k|^2\,dx\,dt+\frac{C_0}{\delta_0}\sum_{j=1}^{k-1}\lambda^{l+3(k-j)}\mathbb{E}\int_0^T\int_{\mathcal{O}_0} \theta^2\gamma^{l+3(k-j)}|z_j|^2\,dx\,dt\\
&\quad+\frac{\varepsilon a_0}{6}\lambda^{3(n-k)+1}\mathbb{E}\int_Q \theta^2\gamma^{3(n-k)+1}|\nabla z_k|^2\,dx\,dt\\
        &\quad+\frac{C_0}{\varepsilon}\sum_{j=1}^{k-1}\lambda^{2l-3(n-2k+j+1)}\mathbb{E}\int_Q \theta^2\gamma^{2l-3(n-2k+j+1)}\xi|z_j|^2 \,dx\,dt.
    \end{aligned}
\end{align}
On the other hand, we also have 
\begin{align*}
    J_9=\sum_{j=1}^{k-1}\mathbb{E}\int_Q v\xi b_{j,k-1}b_{kk}z_jz_k \,dx\,dt+\sum_{j,j'=1}^{k-1}\mathbb{E}\int_Q v\xi b_{j,k-1}b_{j'k}z_jz_{j'} \,dx\,dt.
\end{align*}
Then, Young's inequality yields
\begin{align}\label{j9inee}
    \begin{aligned}
J_9&\leq\frac{\delta_0}{7}\lambda^l\mathbb{E}\int_Q\theta^2\gamma^l\xi a_{k,k-1}|z_k|^2 \,dx\,dt+\frac{C_0}{\delta_0}\sum_{j=1}^{k-1}\left(|b_{j,k-1}|_\infty^4+|b_{kk}|_\infty^4\right)\lambda^l\mathbb{E}\int_Q\theta^2\gamma^l\xi|z_{j}|^2 \,dx\,dt\\
&\quad+\sum_{j=1}^{k-1}|b_{j,k-1}|_\infty^2\lambda^l\mathbb{E}\int_Q\theta^2\gamma^l\xi|z_{j}|^2 \,dx\,dt+\sum_{j=1}^{k-1}|b_{jk}|_\infty^2\lambda^l\mathbb{E}\int_Q\theta^2\gamma^l\xi|z_{j}|^2 \,dx\,dt.
    \end{aligned}
\end{align}
By choosing a large $\lambda\geq C_0T^2|b_{jk}|_\infty^{2/(3(k-j)+1)}$ in \eqref{j9inee}, we find that
\begin{align}\label{ineeJ9}
    \begin{aligned}
J_9\leq&\,\frac{\delta_0}{7}\mathbb{E}\int_Q v\xi a_{k,k-1}|z_k|^2 \,dx\,dt+\frac{C_0}{\delta_0}\sum_{j=1}^{k-1}\Bigg(\lambda^{l+6(k-j)-4}\mathbb{E}\int_Q\theta^2\gamma^{l+6(k-j)-4}\xi|z_{j}|^2 \,dx\,dt\\
&+\lambda^{l+2}\mathbb{E}\int_Q\theta^2\gamma^{l+2}\xi|z_{j}|^2 \,dx\,dt\Bigg)+C_0\sum_{j=1}^{k-1}\lambda^{l+3(k-j)-2}\mathbb{E}\int_Q\theta^2\gamma^{l+3(k-j)-2}\xi|z_{j}|^2 \,dx\,dt\\
&+C_0\sum_{j=1}^{k-1}\lambda^{l+3(k-j)+1}\mathbb{E}\int_Q\theta^2\gamma^{l+3(k-j)+1}\xi|z_{j}|^2 \,dx\,dt.
    \end{aligned}
\end{align}
Combining \eqref{desiinee2.102.11}, \eqref{ineeJ1}, \eqref{ineeJ2}, \eqref{ineeJ3}, \eqref{ineeJ4}, \eqref{ineeJ5}, \eqref{ineeJ6}, \eqref{ineeJ7}, \eqref{ineeJ8} and \eqref{ineeJ9}, and choosing $\delta_0=1/2$, we deduce that
\begin{align}\label{322ine}
    \begin{aligned}
&\,\mathbb{E}\int_Q v\xi a_{k,k-1} |z_k|^2 \,dx\,dt\\
&\leq\varepsilon a_0\left[\mathcal{I}(3(n+1-k),z_k)+\mathcal{I}(3(n-k),z_{k+1})\right]\\
&\quad+C_0\left(1+\frac{1}{\varepsilon}\right)\Bigg[\lambda^\rho\mathbb{E}\int_0^T\int_{\mathcal{O}_0}\theta^2\gamma^{\rho}|z_{k-1}|^2\,dx\,dt\\
&\quad+\lambda^\sigma\mathbb{E}\int_Q\theta^2\gamma^{\sigma}\xi|\nabla z_{k-1}|^2\,dx\,dt+\sum_{j=1}^{k-2}\lambda^\nu\mathbb{E}\int_0^T\int_{\mathcal{O}_0}\theta^2\gamma^{\nu}|z_{j}|^2\,dx\,dt\Bigg],
    \end{aligned}
\end{align}
with
\begin{align*}
    &\rho=\max\left\{l+4,2l-3(n-k)+1,3(n-k)+5\right\},\\
    &\sigma=\max\left\{l+1,2l-1-3(n-k)\right\},\\
    &\nu=\max\left\{3(n-j)+2,2l-3(n-2k+j+1),l+6(k-j)-4,l+3(k-j)+1\right\}.
\end{align*}
Let us now absorb th term ``$\lambda^\sigma\mathbb{E}\displaystyle\int_Q\theta^2\gamma^{\sigma}\xi|\nabla z_{k-1}|^2\,dx\,dt$'' from the right hand side of \eqref{322ine}. In the rest of the proof, we also adopt the notation $\Tilde{v}=(\lambda\gamma)^\sigma\theta^2$. By Itô's formula, we first see that
\begin{align}\label{itofuplasproo}
d(\Tilde{v}\xi |z_{k-1}|^2)=\partial_t\Tilde{v}\,\xi |z_{k-1}|^2 \,dt+\Tilde{v}\xi\left(2z_{k-1} dz_{k-1}+\sum_{j,j'=1}^{k-1} b_{j,k-1}b_{j',k-1} z_j z_{j'}\,dt\right).
\end{align}
Integrating \eqref{itofuplasproo} on $Q$ and taking the expectation on both sides, we get that
\begin{align*}
    0=&\,\mathbb{E}\int_Q \partial_t\Tilde{v}\,\xi |z_{k-1}|^2 \,dx\,dt + \sum_{j,j'=1}^{k-1}\mathbb{E}\int_Q \Tilde{v}\xi  b_{j,k-1}b_{j',k-1} z_j z_{j'} \,dx\,dt\\
    &-2\mathbb{E}\int_Q \sum_{i,j=1}^N\beta_{ij}^{k-1}\frac{\partial z_{k-1}}{\partial x_i}\frac{\partial(\Tilde{v}\xi z_{k-1})}{\partial x_j} \,dx\,dt-2\mathbb{E}\int_Q \Tilde{v}\xi z_{k-1}\sum_{j=1}^{k-1} a_{j,k-1}z_j \,dx\,dt\\&-2\mathbb{E}\int_Q \sum_{j=1}^{k-1} z_j C_{j,k-1}\cdot\nabla(\Tilde{v}\xi z_{k-1})\,dx\,dt-2\mathbb{E}\int_Q \Tilde{v}\xi a_{k,k-1}z_{k-1}z_k \,dx\,dt,  
\end{align*}
which gives
\begin{align}\label{inneqII}
\begin{aligned}
    &\,2\mathbb{E}\int_Q \Tilde{v}\xi\sum_{i,j=1}^N\beta_{ij}^{k-1}\frac{\partial z_{k-1}}{\partial x_i}\frac{\partial z_{k-1}}{\partial x_j}\,dx\,dt\\
    &=\mathbb{E}\int_Q \partial_t\Tilde{v}\,\xi |z_{k-1}|^2 \,dx\,dt + \sum_{j,j'=1}^{k-1}\mathbb{E}\int_Q \Tilde{v}\xi  b_{j,k-1}b_{j',k-1} z_j z_{j'} \,dx\,dt\\
    &\quad-2\mathbb{E}\int_Q \sum_{i,j=1}^N\beta_{ij}^{k-1}\frac{\partial z_{k-1}}{\partial x_i}\frac{\partial (\Tilde{v}\xi)}{\partial x_j} z_{k-1}\,dx\,dt\\
    &\quad-2\mathbb{E}\int_Q \Tilde{v}\xi z_{k-1}\sum_{j=1}^{k-1} a_{j,k-1}z_j \,dx\,dt-2\mathbb{E}\int_Q \sum_{j=1}^{k-1} z_j C_{j,k-1}\cdot\nabla(\Tilde{v}\xi z_{k-1})\,dx\,dt\\
    &\quad-2\mathbb{E}\int_Q \Tilde{v}\xi a_{k,k-1}z_{k-1}z_k \,dx\,dt\\
    &=\sum_{i=1}^6 I_i.
    \end{aligned}
\end{align}
Let us now estimate the terms $I_i$ ($i=1,...,6$). From \eqref{esttmforT},  we first have that 
\begin{align}\label{ineeI1}
    I_1\leq C_0\lambda^{\sigma+2}\mathbb{E}\int_0^T\int_{\mathcal{O}_0} \theta^2\gamma^{\sigma+2} |z_{k-1}|^2 \,dx\,dt.
\end{align}
Taking a large $\lambda\geq C_0T^2|b_{j,k-1}|_\infty^2$
\begin{align}\label{ineeI2}
    I_2\leq C_0\lambda^{\sigma+1}\sum_{j=1}^{k-1}\mathbb{E}\int_0^T\int_{\mathcal{O}_0} \theta^2\gamma^{\sigma+1} |z_{j}|^2 \,dx\,dt.
\end{align}
By \eqref{esttmforT} and Young's inequality, we get
\begin{align}\label{ineeI3}
    I_3\leq \frac{\beta_0}{2}\mathbb{E}\int_Q \Tilde{v}\xi|\nabla z_{k-1}|^2\,dx\,dt+C_0\lambda^{\sigma+2}\mathbb{E}\int_0^T\int_{\mathcal{O}_0} \theta^2\gamma^{\sigma+2}|z_{k-1}|^2 \,dx\,dt.
\end{align}
Using again Young's inequality, we find
\begin{align*}
    I_4\leq C_0\lambda^{\sigma+3/2}\mathbb{E}\int_Q \theta^2\gamma^{\sigma+3/2}\xi|z_{k-1}|^2 \,dx\,dt+C_0\sum_{j=1}^{k-1}|a_{j,k-1}|_\infty^2\lambda^{\sigma-3/2}\mathbb{E}\int_Q \theta^2\gamma^{\sigma-3/2}\xi|z_{j}|^2 \,dx\,dt.
\end{align*}
Then for a large $\lambda\geq C_0T^2|a_{j,k-1}|_\infty^{2/(3(k-j))}$, we obtain that
\begin{align*}
I_4\leq&\, C_0\lambda^{\sigma+3/2}\mathbb{E}\int_Q \theta^2\gamma^{\sigma+3/2}\xi|z_{k-1}|^2 \,dx\,dt\\
&+C_0\sum_{j=1}^{k-1}\lambda^{\sigma-3/2+3(k-j)}\mathbb{E}\int_0^T\int_{\mathcal{O}_0} \theta^2 \gamma^{\sigma-3/2+3(k-j)}  |z_j|^2 \,dx\,dt,
\end{align*}
which implies that
\begin{align}\label{ineeI4}
I_4\leq C_0\sum_{j=1}^{k-1}\lambda^{\sigma-3/2+3(k-j)}\mathbb{E}\int_0^T\int_{\mathcal{O}_0} \theta^2 \gamma^{\sigma-3/2+3(k-j)}  |z_j|^2 \,dx\,dt.
\end{align}
On the other hand, we also have 
\begin{align*}
    I_5\leq&\, C_0\sum_{j=1}^{k-1}|C_{j,k-1}|_\infty\mathbb{E}\int_Q \Tilde{v}\xi|z_j||\nabla z_{k-1}| \,dx\,dt\\
    &+C_0\sum_{j=1}^{k-1}|C_{j,k-1}|_\infty\mathbb{E}\int_Q |\nabla(\Tilde{v}\xi)||z_j||z_{k-1}| \,dx\,dt.
\end{align*}
By \eqref{esttmforT} and Young's inequality, we obtain that
\begin{align}\label{I55239o}
\begin{aligned}
I_5\leq&\,\frac{\beta_0}{2}\mathbb{E}\int_Q \Tilde{v}\xi|\nabla z_{k-1}|^2 \,dx\,dt+C_0\sum_{j=1}^{k-1}|C_{j,k-1}|_\infty^2\lambda^\sigma\mathbb{E}\int_0^T\int_{\mathcal{O}_0} \theta^2\gamma^\sigma|z_j|^2 \,dx\,dt\\
&+C_0\lambda^{\sigma+3/2}\mathbb{E}\int_0^T\int_{\mathcal{O}_0} \theta^2\gamma^{\sigma+3/2}|z_{k-1}|^2 \,dx\,dt\\
&+C_0\sum_{j=1}^{k-1}\lambda^{\sigma+1/2}|C_{j,k-1}|_\infty^2\mathbb{E}\int_0^T\int_{\mathcal{O}_0}\theta^2\gamma^{\sigma+1/2}|z_j|^2 \,dx\,dt.
\end{aligned} 
\end{align}
Taking a large $\lambda\geq C_0T^2|C_{j,k-1}|_\infty^{2/{(3(k-j)-2)}}$ in \eqref{I55239o}, we deduce that
\begin{align}\label{ineeI5}
\begin{aligned}
    I_5\leq&\, \frac{\beta_0}{2}\mathbb{E}\int_{Q}\Tilde{v}\xi|\nabla z_{k-1}|^2 \,dx\,dt+C_0\sum_{j=1}^{k-1}\lambda^{\sigma+3(k-j)-2}\mathbb{E}\int_0^T\int_{\mathcal{O}_0}\theta^2\gamma^{\sigma+3(k-j)-2} |z_j|^2 \,dx\,dt\\&+C_0\sum_{j=1}^{k-1}\lambda^{\sigma-3/2+3(k-j)}\mathbb{E}\int_0^T\int_{\mathcal{O}_0}\theta^2\gamma^{\sigma-3/2+3(k-j)}|z_j|^2 \,dx\,dt.
    \end{aligned}
\end{align}
For the last term $I_6$, we first see that
\begin{align*}
    I_6\leq \delta_1\lambda^l\mathbb{E}\int_Q \theta^2\gamma^l\xi a_{k,k-1}|z_k|^2 \,dx\,dt+\frac{C_0}{\delta_1}\lambda^{2\sigma-l}\mathbb{E}\int_Q \theta^2\gamma^{2\sigma-l}\xi |z_{k-1}|^2 \,dx\,dt,
\end{align*}
which implies 
\begin{align}\label{ineeI6}
    I_6\leq \delta_1\mathbb{E}\int_Q v\xi a_{k,k-1}|z_k|^2 \,dx\,dt+\frac{C_0}{\delta_1}\lambda^{2\sigma-l}\mathbb{E}\int_0^T\int_{\mathcal{O}_0} \theta^2\gamma^{2\sigma-l} |z_{k-1}|^2 \,dx\,dt.
\end{align}
Recalling \eqref{assmponalpha} and combining \eqref{inneqII}, \eqref{ineeI1}, \eqref{ineeI2}, \eqref{ineeI3}, \eqref{ineeI4}, \eqref{ineeI5}, and \eqref{ineeI6}, we obtain for a large $\lambda\geq C_0T^2$
\begin{align}\label{lastII}
    \begin{aligned}
&\,\lambda^\sigma\mathbb{E}\int_Q\theta^2\gamma^{\sigma}\xi|\nabla z_{k-1}|^2\,dx\,dt\\&
\leq C_0\lambda^{\sigma+2}\mathbb{E}\int_0^T\int_{\mathcal{O}_0}\theta^2\gamma^{\sigma+2} |z_{k-1}|^2 \,dx\,dt+\frac{C_0}{\delta_1}\lambda^{2\sigma-l}\mathbb{E}\int_0^T\int_{\mathcal{O}_0}\theta^2\gamma^{2\sigma-l} |z_{k-1}|^2 \,dx\,dt\\
&\quad+\delta_1\mathbb{E}\int_Q v\xi a_{k,k-1}|z_k|^2 \,dx\,dt+C_0\sum_{j=1}^{k-1}\lambda^{\sigma-3/2+3(k-j)}\mathbb{E}\int_0^T\int_{\mathcal{O}_0}\theta^2\gamma^{\sigma-3/2+3(k-j)}|z_j|^2 \,dx\,dt.
    \end{aligned}
\end{align}
Therefore, by combining \eqref{322ine} and \eqref{lastII}, and choosing a small enough $\delta_1\leq C_0\varepsilon/(1+\varepsilon)$, we end up with
\begin{align*}
    \begin{aligned}
&\,\mathbb{E}\int_Q v\xi a_{k,k-1} |z_k|^2 \,dx\,dt\\
&\leq\varepsilon a_0\left[\mathcal{I}(3(n+1-k),z_k)+\mathcal{I}(3(n-k),z_{k+1})\right]\\
&\quad+C_0\left(1+\frac{1}{\varepsilon}\right)\Bigg[\lambda^\rho\mathbb{E}\int_0^T\int_{\mathcal{O}_0}\theta^2\gamma^{\rho}\xi|z_{k-1}|^2\,dx\,dt+\lambda^{\sigma+2}\mathbb{E}\int_0^T\int_{\mathcal{O}_0}\theta^2\gamma^{\sigma+2} |z_{k-1}|^2 \,dx\,dt\\
&\hspace{1cm}+\lambda^{2\sigma-l}\mathbb{E}\int_0^T\int_{\mathcal{O}_0}\theta^2\gamma^{2\sigma-l} |z_{k-1}|^2 \,dx\,dt+\sum_{j=1}^{k-2}\lambda^\nu\mathbb{E}\int_0^T\int_{\mathcal{O}_0}\theta^2\gamma^{\nu}\xi|z_{j}|^2\,dx\,dt\\
&\hspace{1cm}+\sum_{j=1}^{k-1}\lambda^{\sigma-3/2+3(k-j)}\mathbb{E}\int_0^T\int_{\mathcal{O}_0}\theta^2\gamma^{\sigma-3/2+3(k-j)}|z_j|^2 \,dx\,dt\Bigg],
    \end{aligned}
\end{align*}
which provides that
\begin{align}\label{ineefnl}
    \begin{aligned}
\mathbb{E}\int_Q v\xi a_{k,k-1} |z_k|^2 \,dx\,dt&\leq\varepsilon a_0\left[\mathcal{I}(3(n+1-k),z_k)+\mathcal{I}(3(n-k),z_{k+1})\right]\\
&\quad+C_0\left(1+\frac{1}{\varepsilon}\right)\sum_{j=1}^{k-1} \mathcal{L}_{\mathcal{O}_0}(l_{kj},z_j).
    \end{aligned}
\end{align}
Finally, by combining \eqref{ineefnl} and \eqref{estfirl0}, we conclude the desired estimate \eqref{desiinee2.10}. This completes the proof of Lemma \ref{lmm2.3}.
\end{proof}
We are now in position to prove Theorem \ref{thmm1.2}.
\begin{proof}[Proof of Theorem \ref{thmm1.2}]
Let us choose $G_1\Subset\widetilde{G}_0\subset G_0$ and $z=(z_1,...,z_n)$ be the solution of \eqref{1.1Adjoint}. By applying the Carleman estimate \eqref{3.202002gene} to each function $z_i$ ($1\leq i\leq n$) with $\mathcal{O}=G_1$, $L_0=L_i$, $d=3(n+1-i)$ and the terms
\begin{align*}
    &F_0=-\sum_{j=1}^i a_{ji}z_j-a_{i+1,i}z_{i+1},\quad F=\sum_{j=1}^i C_{ji}z_j,\quad F_1=-\sum_{j=1}^i b_{ji}z_j,\qquad1\leq i\leq n,
\end{align*}
with $z_{n+1}=0$. Then, we obtain for a large $\lambda\geq C_0(T+T^2)$.
\begin{align}\label{ineee2399}
\begin{aligned}
\mathcal{I}(3(n+1-i),z_i)\leq&\, C_0\bigg(\mathcal{L}_{G_1}(3(n+1-i),z_i)+\mathcal{I}(3(n-i),z_{i+1})\\
&\qquad+\sum_{j=1}^i \lambda^{3(n-i)}|a_{ji}|^2_\infty\mathbb{E}\int_Q \theta^2\gamma^{3(n-i)}|z_j|^2 \,dx\,dt\\
&\qquad+\sum_{j=1}^i\lambda^{3(n-i)+2}|b_{ji}|^2_\infty\mathbb{E}\int_Q \theta^2\gamma^{3(n-i)+2}|z_j|^2\,dx\,dt\\
&\qquad+\sum_{j=1}^i \lambda^{3(n-i)+2}|C_{ji}|^2_\infty\mathbb{E}\int_Q \theta^2\gamma^{3(n-i)+2}|z_j|^2 \,dx\,dt\bigg),\quad1\leq i\leq n.
\end{aligned}
\end{align}
By an iteration argument, we deduce from \eqref{ineee2399} that 
\begin{align}\label{ineee2399neww}
\begin{aligned}
&\,\sum_{i=1}^n\mathcal{I}(3(n+1-i),z_i)\\
&\leq C_0\Bigg(\sum_{i=1}^n\mathcal{L}_{G_1}(3(n+1-i),z_i)+\sum_{i=1}^n\sum_{j=1}^i \lambda^{3(n-i)}|a_{ji}|^2_\infty\mathbb{E}\int_Q \theta^2\gamma^{3(n-i)}|z_j|^2 \,dx\,dt\\
&\hspace{1.2cm}+\sum_{i=1}^n\sum_{j=1}^i\lambda^{3(n-i)+2}|b_{ji}|^2_\infty\mathbb{E}\int_Q \theta^2\gamma^{3(n-i)+2}|z_j|^2\,dx\,dt\\
&\hspace{1.2cm}+\sum_{i=1}^n\sum_{j=1}^i \lambda^{3(n-i)+2}|C_{ji}|^2_\infty\mathbb{E}\int_Q \theta^2\gamma^{3(n-i)+2}|z_j|^2 \,dx\,dt\Bigg).
\end{aligned}
\end{align}
Observe that one can absorb the last three terms in the right hand side of \eqref{ineee2399neww} using the left hand side. For that by recalling \eqref{2.301} and choosing a large $\lambda$ so that
$$\lambda\geq\lambda_0= C_0\left(T+T^2+T^2\max_{i\leq j}\left(|a_{ij}|_\infty^{\frac{2}{3(j-i)+3}}+|C_{ij}|_\infty^{\frac{2}{3(j-i)+1}}+|b_{ij}|_\infty^{\frac{2}{3(j-i)+1}}\right)\right),$$
we conclude that
\begin{align}\label{estim2441}
\begin{aligned}
\sum_{i=1}^n\mathcal{I}(3(n+1-i),z_i)\leq&\, C_0\sum_{i=1}^n\mathcal{L}_{G_1}(3(n+1-i),z_i).
\end{aligned}
\end{align}

Let us now use the estimate \eqref{desiinee2.10} to eliminate all the local integrals $\mathcal{L}_{G_1}(3(n+1-i),z_i)$ for $2\leq i\leq n$. To do so, let us choose open subsets $\widetilde{\mathcal{O}}_i\subset\widetilde{G}_0$ with $2\leq i\leq n$, so that
$$G_1\Subset\widetilde{\mathcal{O}}_n\Subset\widetilde{\mathcal{O}}_{n-1}\Subset...\Subset\Subset\widetilde{\mathcal{O}}_2\subset\widetilde{G}_0.$$
Applying the estimate \eqref{desiinee2.10} for $\mathcal{O}_1=G_1$, $\mathcal{O}_0=\widetilde{O}_n$, $k=n$, $l=3$, then for all $\varepsilon>0$, we obtain 
\begin{align}\label{inee1n}
\mathcal{L}_{G_1}(3,z_n)\leq \varepsilon \mathcal{I}(3,z_n)+C_0\left(1+\frac{1}{\varepsilon}\right)\sum_{j=1}^{n-1}\mathcal{L}_{\widetilde{\mathcal{O}}_n}(l_{nj},z_j).
\end{align}
Then by combining \eqref{estim2441} and \eqref{inee1n} and choosing a small $\varepsilon$, we deduce from \eqref{estim2441} that
\begin{align}\label{ineeznn1}
\begin{aligned}
\sum_{i=1}^n\mathcal{I}(3(n+1-i),z_i)\leq&\, C_0\sum_{i=1}^{n-1}\mathcal{L}_{\widetilde{\mathcal{O}}_n}(l_{ni},z_i).
\end{aligned}
\end{align}
We now apply the estimate \eqref{desiinee2.10} for $\mathcal{O}_1=\widetilde{\mathcal{O}}_n$, $\mathcal{O}_0=\widetilde{O}_{n-1}$, $k=n-1$, $l=l_{n,n-1}$, then for all $\varepsilon>0$, we get that
\begin{align}\label{inee2n}
\mathcal{L}_{\widetilde{\mathcal{O}}_n}(l_{n,n-1},z_{n-1})\leq \varepsilon [\mathcal{I}(6,z_{n-1})+\mathcal{I}(3,z_{n})]+C_0\left(1+\frac{1}{\varepsilon}\right)\sum_{j=1}^{n-2}\mathcal{L}_{\widetilde{\mathcal{O}}_{n-1}}(l_{n-1,j},z_j).
\end{align}
Combining \eqref{ineeznn1} and \eqref{inee2n} and taking small $\varepsilon$, we obtain 
\begin{align}\label{ineeznn1n1}
\begin{aligned}
\sum_{i=1}^n\mathcal{I}(3(n+1-i),z_i)\leq&\, C_0\sum_{i=1}^{n-2}\mathcal{L}_{\widetilde{\mathcal{O}}_{n-1}}(l_{ni},z_i).
\end{aligned}
\end{align}
Note that, from \eqref{estim2441} to \eqref{ineeznn1n1}, we have eliminated the terms $z_n$ and $z_{n-1}$ from the right hand side. Then by the same strategy of computations and in a finite steps, we deduce the desired Carleman estimate \eqref{carestimate}. This concludes the proof of Theorem \ref{thmm1.2}.
\end{proof}

\section{Null controllability result: Proof of Theorem \ref{thm01.1}}\label{sec3}

Let us first prove the observability inequality \eqref{observineqqke}.

\begin{proof}[Proof of Proposition \ref{propo1.1}]
From the Carleman estimate \eqref{carestimate}, we have that
\begin{align}\label{inn11}
\lambda^{3}\sum_{i=1}^n\mathbb{E}\int_{T/4}^{3T/4}\int_G \theta^2\gamma^{3}|z_i|^2\,dx\,dt\leq C_0 \lambda^l\mathbb{E}\int_{Q_0} \theta^2\gamma^l|z_1|^2 \,dx\,dt,
\end{align}
for any $$\lambda\geq\lambda_0=C_0\left(T+T^2+T^2\max_{i\leq j}\left(|a_{ij}|^{\frac{2}{3(j-i)+3}}_\infty+|C_{ij}|^{\frac{2}{3(j-i)+1}}_\infty+|b_{ij}|^{\frac{2}{3(j-i)+1}}_\infty\right)\right).$$
It is easy to see that there exists a constant $C_0>0$ such that for a large 
$\lambda\geq C_0T^2$
\begin{align}\label{ineqqthegam}
\begin{aligned}
&\,\lambda^3\theta^2\gamma^3\geq C_0 e^{-C_0\lambda T^{-2}}\qquad\textnormal{in}\quad (T/4,3T/4)\times G,\\
&\hspace{1.7cm}\lambda^l\theta^2\gamma^l\leq C_0\qquad\textnormal{in}\quad Q.
\end{aligned}
\end{align}
From \eqref{inn11} and \eqref{ineqqthegam}, we deduce that
\begin{align}\label{firstestim}
\sum_{i=1}^n\mathbb{E}\int_{T/4}^{3T/4}\int_G |z_i|^2\,dx\,dt\leq C_0e^{C_0\lambda T^{-2}} \mathbb{E}\int_{Q_0} |z_1|^2 \,dx\,dt,
\end{align}
for any
$$\lambda\geq\lambda_1=C_0\left(T+T^2+T^2\max_{i\leq j}\left(|a_{ij}|_\infty^{\frac{2}{3(j-i)+3}}+|C_{ij}|_\infty^{\frac{2}{3(j-i)+1}}+|b_{ij}|_\infty^{\frac{2}{3(j-i)+1}}\right)\right).$$
In the rest of the proof, we fix $\lambda=\lambda_1$. On the other hand, we need to derive an energy estimate for solutions of system \eqref{1.1Adjoint}. Let $t\in(0,T)$, by Itô's formula, we compute 
$d|z_i|_{L^2(G)}^2$ (with $1\leq i\leq n$), then we integrate the obtained equality on $(t,T)$ and taking expectation on both sides, we obtain
\begin{align*}
\begin{aligned}
&\,\mathbb{E}|z_i(T,\cdot)|_{L^2(G)}^2-\mathbb{E}|z_i(t,\cdot)|_{L^2(G)}^2\\
&=- 2\mathbb{E}\int_t^T\int_G \sum_{i',j'=1}^N \beta_{i'j'}^i\frac{\partial z_i}{\partial x_{i'}}\frac{\partial z_i}{\partial x_{j'}}\,dx\,dt-2\mathbb{E}\int_t^T\int_G \sum_{j=1}^i a_{ji}z_j z_i\,dx\,dt\\
&\hspace{0.5cm}-2\mathbb{E}\int_t^T\int_G \sum_{j=1}^i z_j C_{ji}\cdot\nabla z_i\,dx\,dt-2\mathbb{E}\int_t^T\int_G a_{i+1,i}z_iz_{i+1}\,dx\,dt\\
&\hspace{0.5cm}+\sum_{j,j'=1}^i\mathbb{E}\int_t^T\int_G b_{ji}b_{j'i}z_j z_{j'}\,dx\,dt,\qquad 1\leq i\leq n,
\end{aligned}
\end{align*}
with $z_{n+1}=0$. Then, it follows that for any $\varepsilon>0$
\begin{align}\label{ineeq1}
\begin{aligned}
&\,\sum_{i=1}^n\mathbb{E}|z_i(T,\cdot)|_{L^2(G)}^2-\sum_{i=1}^n\mathbb{E}|z_i(t,\cdot)|_{L^2(G)}^2\\
&\leq - 2\beta_0\sum_{i=1}^n\mathbb{E}\int_t^T\int_G |\nabla z_i|^2\,dx\,dt+\sum_{i=1}^n\sum_{j=1}^i|a_{ji}|_\infty\mathbb{E}\int_t^T\int_G |z_j|^2 \,dx\,dt
\\
&\quad+\sum_{i=1}^n\sum_{j=1}^i|a_{ji}|_\infty\mathbb{E}\int_t^T\int_G |z_i|^2 \,dx\,dt+\varepsilon n\sum_{i=1}^n\mathbb{E}\int_t^T\int_G |\nabla z_i|^2 \,dx\,dt\\
&\quad+\frac{1}{\varepsilon}\sum_{i=1}^n\sum_{j=1}^i|C_{ji}|^2_\infty\mathbb{E}\int_t^T\int_G |z_j|^2 \,dx\,dt
+C_0\sum_{i=1}^n\mathbb{E}\int_t^T\int_G |z_i|^2\,dx\,dt\\
&\quad+\sum_{i=1}^n\sum_{j=1}^i|b_{ji}|_\infty^2\mathbb{E}\int_t^T\int_G |z_j|^2\,dx\,dt.
\end{aligned}
\end{align}
Choosing a small $\varepsilon>0$ in \eqref{ineeq1}, it is easy to see that 
\begin{align*}
\begin{aligned}
&\,\sum_{i=1}^n\mathbb{E}|z_i(T,\cdot)|_{L^2(G)}^2-\sum_{i=1}^n\mathbb{E}|z_i(t,\cdot)|_{L^2(G)}^2\\
&\leq C_0\left(1+\max_{i\leq j}\left(|a_{ij}|_\infty+|C_{ij}|^2_\infty+|b_{ij}|^2_\infty\right)\right)\sum_{i=1}^n\mathbb{E}\int_t^T\int_G |z_i|^2\,dx\,dt.
\end{aligned}
\end{align*}
Therefore by  Gronwall's inequality, it follows that
\begin{align}\label{lastbyGrn}
\mathbb{E}\int_G |z(T,x)|^2 dx\leq e^{C_0T(1+\max_{i\leq j}(|a_{ij}|_\infty+|C_{ij}|_\infty^2+|b_{ij}|^2_\infty))}\sum_{i=1}^n\mathbb{E}\int_G|z_i(t,x)|^2 dx.
\end{align}
Integrating \eqref{lastbyGrn} on $(T/4,3T/4)$ and combining the obtained inequality with \eqref{firstestim}, we conclude the desired  observability inequality \eqref{observineqqke}.
\end{proof}
We are now ready to establish our null controllability result of system \eqref{1.1}.
\begin{proof}[Proof of Theorem \ref{thm01.1}]
Fix $\varepsilon>0$ and $y^T\in L^2_{\mathcal{F}_T}(\Omega;L^2(G;\mathbb{R}^n))$. Let us introduce the following optimal control problem
\begin{align}\label{contrPbb}
\inf\{J_\varepsilon(u),\;u\in L^2_\mathcal{F}(0,T;L^2(G))\},
\end{align}
with
$$J_\varepsilon(u)=\frac{1}{2}\mathbb{E}\int_Q u^2 \,dx\,dt+\frac{1}{2\varepsilon}\mathbb{E}\int_G |y(0)|^2dx,$$
where $(y,Y)$ is the solution of \eqref{1.1} associated with the control $u$ and the final state $y^T$. It is easy to see that the problem \eqref{contrPbb} admits a unique optimal solution $u_\varepsilon$ such that
\begin{align}\label{contruu}
u_\varepsilon=\chi_{G_0}(x)z_\varepsilon^1, 
\end{align}
where $z_\varepsilon=(z_\varepsilon^1,...,z_\varepsilon^n)$ is the solution of the equation
\begin{equation}\label{soluuzeps}
\begin{cases}
\begin{array}{ll}
dz_\varepsilon-L(t)z_\varepsilon \,dt=\left(-A^*z_\varepsilon+\nabla\cdot(C^*z_\varepsilon)\right)\,dt-B^*z_\varepsilon \,dW(t)&\textnormal{in}\,\,Q,\\
z_\varepsilon=0&\textnormal{on}\,\,\Sigma,\\
z_\varepsilon(0)=\frac{1}{\varepsilon}y_\varepsilon(0)&\textnormal{in}\,\,G,
			\end{array}
		\end{cases}
\end{equation}
with $(y_\varepsilon,Y_\varepsilon)$ is the solution of \eqref{1.1} associated to the control $u_\varepsilon$ and the final state $y^T$.

On the other hand, using Itô's formula for $d(y_\varepsilon,z_\varepsilon)_{L^2(G;\mathbb{R}^n)}$, we obtain that
\begin{align*}
    \mathbb{E}\int_Q \chi_{G_0}(x)u_\varepsilon z_\varepsilon^1 \,dx\,dt+\frac{1}{\varepsilon}\mathbb{E}\int_G |y_\varepsilon(0,x)|^2 dx=\mathbb{E}\int_G (y^T,z_\varepsilon(T))_{\mathbb{R}^n} dx.
\end{align*}
Recalling \eqref{contruu}, it follows that
\begin{align*}
    \mathbb{E}\int_{Q_0} \left|z_\varepsilon^1\right|^2 \,dx\,dt+\frac{1}{\varepsilon}\mathbb{E}\int_G |y_\varepsilon(0,x)|^2 dx\leq\frac{e^{C_0K}}{2}\mathbb{E}\int_G \left|y^T\right|^2 dx+\frac{1}{2e^{C_0K}}\mathbb{E}\int_G |z_\varepsilon(T,x))|^2 dx,
\end{align*}
where $e^{C_0K}$ is the observability constant in \eqref{observineqqke}. Using again the inequality \eqref{observineqqke} for solutions of \eqref{soluuzeps}, we deduce that
\begin{align}\label{uniffestm}
    \mathbb{E}\int_Q |u_\varepsilon|^2 \,dx\,dt+\frac{2}{\varepsilon}\mathbb{E}\int_G |y_\varepsilon(0,x)|^2 dx\leq e^{C_0K}\mathbb{E}\int_G \left|y^T\right|^2 dx.
\end{align}
Then there exists a sub-sequence $\widetilde{u}_\varepsilon$ of $u_\varepsilon$ such that
$$\widetilde{u}_\varepsilon\longrightarrow\widehat{u}\quad\textnormal{weakly in}\quad L^2_\mathcal{F}(0,T;L^2(G)),\;\;\textnormal{as}\;\;\varepsilon\rightarrow0.$$
Notice that $\textnormal{supp}\, \widehat{u}\subset [0,T]\times \overline{G_0}$. Let us now denote by $(\widehat{y},\widehat{Y})$ the solution of system \eqref{1.1} associated to the control $\widehat{u}$ and the terminal condition $y^T$. Then from \eqref{uniffestm}, we conclude that $\widehat{y}(0,\cdot)=0$ in $G$, $\mathbb{P}$-a.s., and that the control $\widehat{u}$ satisfies the desired inequality \eqref{1.2201}. This achieves the proof of Theorem \ref{thm01.1}.
\end{proof}

\section{Conclusion}
In this paper, we have studied the null controllability of the system \eqref{1.1} under the assumptions \eqref{assmAC} and \eqref{asspmt}. This system is a prototype of cascade systems of coupled backward stochastic parabolic equations, incorporating both reaction and convection terms with bounded coefficients, as well as general second-order parabolic operators under standard conditions. To achieve this, we first prove a Carleman estimate for the associated adjoint coupled forward stochastic parabolic system \eqref{1.1Adjoint}, using a general Carleman estimate for forward stochastic parabolic equations with a parameter \( d \in \mathbb{R} \) and a drift term in the negative Sobolev space \( H^{-1} \)-space. As a result, we derive the required observability inequality and, through a classical duality argument, conclude our null controllability result. Furthermore, we provide an explicit estimate for the null control cost with respect to the final time \( T \) and the potentials \( a_{ij}, b_{ij}, \) and \( C_{ij} \) (with \( 1 \leq i \leq j \leq n \)).


\begin{thebibliography}{9}


\bibitem{ammBDG}
F. Ammar-Khodja, A. Benabdallah, C. Dupaix, and M. González-Burgos,  
\newblock{ A Kalman rank condition for the localized distributed controllability of a class of linear parabolic systems},
\newblock{\em Journal of Evolution Equations,} \textbf{9}(2009), 267--291.

\bibitem{surveyAmmarKBGT}
F. Ammar-Khodja, A. Benabdallah, M. González Burgos, and L. de Teresa,  
\newblock{ Recent results on the controllability of linear coupled parabolic problems: a survey},
\newblock{\em Mathematical Control and Related Fields,} \textbf{1}(2011), 267--306.



\bibitem{barbu2003carleman}
V. Barbu, A. R{\u{a}}{\c{s}}canu, and G. Tessitore,
\newblock{ Carleman estimate and controllability of linear stochastic heat equations}, 
\newblock{\em Applied Mathematics \& Optimization}, \textbf{47}(2003), 97--120.

	
\bibitem{SPEwithDBC}
M. Baroun, S. Boulite, A. Elgrou, and L. Maniar,
\newblock{ Null controllability for stochastic parabolic equations with dynamic boundary conditions},
\newblock{\em Journal of Dynamical and Control Systems}, \textbf{29}(2023), 1727--1756.


\bibitem{BackSPEwithDBC}
M. Baroun, S. Boulite, A. Elgrou, and L. Maniar,
\newblock Null controllability for backward stochastic parabolic convection-diffusion equations with dynamic boundary conditions, 
\newblock {\em Mathematical Control and Related Fields}, (2024).


\bibitem{convectermsEBBm}
M. Baroun, S. Boulite, A. Elgrou, and L. Maniar,
\newblock Null controllability for stochastic parabolic equations with convection terms, 
\newblock {\em arXiv e-prints, arXiv-2306}.

\bibitem{RobinBCelg}
S. Boulite, A. Elgrou, and L. Maniar,
\newblock Null controllability for stochastic parabolic equations with Robin boundary conditions,
\newblock {\em arXiv preprint arXiv:2406.08103}.

\bibitem{coron07}
J.-M. Coron,
\newblock{ Control and nonlinearity}, 
\newblock{\em American Mathematical Society}, (2007).

\bibitem{dupLissy}
M. Duprez and P. Lissy,
\newblock{Indirect controllability of some linear parabolic systems of $m$ equations with $m-1$ controls involving coupling terms of zero or first order,} 
\newblock{\em Journal de Mathématiques Pures et Appliquées}, \textbf{106}(2016), 905--934.


\bibitem{Fadilicasc}
M. Fadili,
\newblock Controllability of a backward stochastic cascade system of coupled parabolic heat equations by one control force, 
\newblock{\em Evolution Equations and Control Theory}, \textbf{12}(2023), 446--458.



	
\bibitem{fursikov1996controllability}
A. V. Fursikov and O. Yu. Imanuvilov,
\newblock{ Controllability of evolution equations}, 
\newblock{\em Lecture Note Series 34, Research Institute of Mathematics, Seoul National University,} (1996).


\bibitem{GonBurTer}
M. González-Burgos and L. de Teresa, 
\newblock{ Controllability results for cascade systems of $m$-coupled parabolic PDEs by one control force}, 
\newblock{\em Portugaliae Mathematica}, \textbf{67}(2010), 91--113.

\bibitem{GuerreroS}
S. Guerrero,
\newblock{ Null controllability of some systems of two parabolic equations with one control force}, 
\newblock{\em SIAM Journal on Control and Optimization}, \textbf{46}(2007), 379--394.


\bibitem{imanuvilov2003carleman}
O. Yu. Imanuvilov and M. Yamamoto,
\newblock{ Carleman estimate for a parabolic equation in a Sobolev space of negative order and its applications in Control of Nonlinear Distributed Parameter Systems}, 
\newblock{\em Lecture Notes in Pure and Applied Mathematics}, 218, Marcel Dekker, New York, (2001), 113--137.

	
\bibitem{krylov1994}
N. V. Krylov,
\newblock{ A $W^n_2$ -theory of the Dirichlet problem for SPDEs in general smooth domains}, 
\newblock{\em Probability Theory and Related Fields,} \textbf{98}(1994), 389--421.
 
\bibitem{LiQi}	
H. Li and Q. L{\"u}, 
\newblock{ Null controllability for some systems of two backward stochastic heat equations with one control force}, 
\newblock{\em Chinese Annals of Mathematics, Series B,} \textbf{33}(2012), 909--918.
 
\bibitem{lions1972some}
J.-L. Lions,
\newblock { Optimal Control of Systems Governed by Partial Differential Equations}, 
\newblock{\em Springer-Verlag, New York-Berlin} (1971).
 
\bibitem{LiuuLiuX}
 L. Liu and X. Liu,
 \newblock{ Controllability and observability of some coupled stochastic parabolic systems}, 
 \newblock{\em Mathematical Control and Related Fields}, \textbf{8}(2018), 829--854.
 
\bibitem{liu2014global}
X. Liu,
\newblock{ Global Carleman estimate for stochastic parabolic equations, and its application}, 
\newblock{\em ESAIM Control, Optimisation and Calculus of Variations}, \textbf{20}(2014), 823--839.

\bibitem{liu14couplfor}
X. Liu,
\newblock{ Controllability of some coupled stochastic parabolic systems with fractional order spatial differential operators by one control in the drift}, 
\newblock{\em SIAM Journal on Control and Optimization,} \textbf{52}(2014), 836--860.



\bibitem{lu2011some}
Q. L{\"u},
\newblock Some results on the controllability of forward stochastic heat equations with control on the drift,
\newblock {\em Journal of Functional Analysis}, \textbf{260}(2011), 832--851.


\bibitem{lu2021mathematical}
Q. L{\"u} and X. Zhang,
\newblock { Mathematical control theory for stochastic partial differential equations}, 
\newblock{\em Springer,} (2021).






\bibitem{tang2009null}
S. Tang and X. Zhang,
\newblock{ Null controllability for forward and backward stochastic parabolic equations}, 
\newblock{\em SIAM Journal on Control and Optimization}, \textbf{48}(2009), 2191--2216.


 
\bibitem{yan2018carleman}
Y. Yan,
\newblock{ Carleman estimates for stochastic parabolic equations with Neumann boundary conditions and applications}, 
\newblock{\em Journal of mathematical analysis and applications}, \textbf{457}(2018), 248--272.

\bibitem{yan20111}
Y. Yan and F. Sun, 
\newblock{ Insensitizing controls for a forward stochastic heat equation}, 
\newblock{\em Journal of mathematical analysis and applications}, \textbf{384}(2011), 138--150.



\bibitem{observineqback}
D. Yang and J. Zhong, 
\newblock Observability inequality of backward stochastic heat equations for measurable sets and its applications, 
\newblock{\em SIAM Journal on Control and Optimization}, \textbf{54}(2016), 1157--1175.

\bibitem{wangNull24}
Y. Wang,
\newblock{ Null controllability for stochastic coupled systems of fourth order parabolic equations},
\newblock{\em Journal of Mathematical Analysis and Applications}, (2024).


\bibitem{Zab95}
J. Zabczyk, 
\newblock{ Mathematical control theory}, 
\newblock{\em Springer International Publishing,} 2020.



\bibitem{zhou1992}
X. Zhou,
\newblock{\em A duality analysis on stochastic partial differential equations}, 
\newblock{\em Journal of Functional Analysis} \textbf{103}(1992), 275--293.


\end{thebibliography}
\end{document}